\documentclass[11pt]{article}
\usepackage{amsmath, amssymb, amsfonts, amsthm, authblk, color, graphicx, enumitem, mathrsfs, indentfirst}
\usepackage{hyperref, cleveref}
\hypersetup{colorlinks=true, linkcolor=blue, filecolor=magenta, urlcolor=cyan}
\usepackage[textwidth=6in,textheight=9in]{geometry}
\allowdisplaybreaks

\newtheorem{theorem}{Theorem}
\newtheorem{lemma}{Lemma}[section]

\newtheorem{remark}{Remark}[section]

\newtheorem{problem}{Problem}

\newtheorem{example}{Example}

\numberwithin{equation}{section}

\newcommand{\keywords}[1]{\small\textbf{\textit{Keywords---}}#1}

\title{A data-driven model reduction approach for backward fractional diffusion-wave equations}

\author[1]{Dakang Cen}
\author[2]{Zhiyuan Li\footnote{Corresponding author 1: lizhiyuan@nbu.edu.cn, supported by the National Natural Science Foundation of China (no. 12271277), Ningbo Youth Leading Talent Project (no. 2024QL045).  and the Open Research Fund of the Key Laboratory of Nonlinear Analysis \& Applications (Central China Normal University), Ministry of Education, China (no. NAA20230RG002).}}

\author[3]{Wenlong Zhang\footnote{Corresponding author 2: zhangwl@sustech.edu.cn, supported by the National Natural Science Foundation of China under grant
numbers No.12371423 and No.12241104.}}
\affil[1,3]{Department of Mathematics, Southern University of Science and Technology, Shenzhen, 518055, China}
\affil[2]{School of Mathematics and Statistics, Ningbo University, Ningbo, 315211, China}

\begin{document}
\maketitle

\abstract{In this work, we propose an observation system based on the available data which solution is one-be-one mapping to the forward problem(with the unknown initial function) solution. It implies their solutions share the same linear structure in the finite dimensional space. Theoretical results show model reduction approaches constructed for the observation system also work well for the forward problem, which significantly improve the efficiency of solving the inverse problem.  Several numerical examples are presented to support our finding.}

\keywords{backward fractional wave, model reduction method, inverse crime}

\textbf{MSC2020:} 35R11, 35R09, 35B40

\section{Introduction}
Assuming that $\alpha\in(1,2)$ and that $\Omega \subset \mathbb{R}^d$, $d=1,2$, is a bounded domain with sufficiently smooth boundary $\partial \Omega$, we consider the following fractional wave equation:
\begin{equation}\label{eq-gov}
\begin{cases}
  \partial_t^\alpha u - \Delta u = f(x,t), & (x,t) \in \Omega \times (0,T), \\
  u(x,0) = a_0(x), & x \in \Omega, \\
  \frac{\partial}{\partial t}u(x,0) = a_1(x), & x \in \Omega, \\
  u(x,t) = 0, & (x,t) \in \partial \Omega \times (0,T),
\end{cases}
\end{equation}
where the operator $\partial_t^\alpha$ is referred to as the Caputo derivative of order $\alpha$, defined by
$$
\partial_t^\alpha \psi(t) := \frac{1}{\Gamma(2 - \alpha)} \int_0^t (t-\tau)^{1-\alpha} \psi''(\tau) d\tau, \quad t>0.
$$

In our previous work \cite{LiZhang2025,LiZhang2024}, we investigated recovering $a_1(x)$ from the terminal measurement of (\ref{eq-gov}). We established stability of the inversion and introduced a scattered point measurement-based regularization method. The optimal error estimates not only balance discretization error, the noise, and the number of observation points, but also propose an a priori choice of regularization parameters. The iteration optimization method is used to find the true initial value of the backward problem. It has to solve the forward diffusion-wave equation one or two times in each iteration. However, (\ref{eq-gov}) is a non-local model, the current solution $u(t_k)$, $k\in\mathbb{N}^+$ depends on all previous solutions $u(t_i)$, $i=0,\cdots,k-1$. The computation time is much longer than that for traditional wave equations, especially for high-dimensional problems.

This prompts us to consider how we could save time on calculations. One of the straightforward ideas is to improve the efficiency of solving forward problems. In our previous work, space discretization is based on the finite element method. The computational time grows rapidly as the sizes of discrete problems increase.
Inspired by \cite{JinB2017sub_pod,Zhang2024pod}, we investigate the model reduction low-rank approximation by the proper orthogonal decomposition (POD) method. The so-called data-driven approach is to train a set of POD basis functions based on the observation data of the inverse problem to achieve a significant dimension reduction in the solution space. It does not rely on the prior information of the solution, which avoids the inverse crime.

\begin{problem}\label{prob-back}
We follow the regularization method in \cite{LiZhang2025,LiZhang2024} to numerically reconstruct the unknown initial value $a_1$ based on the terminal value observation data $u(x_i,T)$ contaminated with random noise, where $\{x_i\}_{i=1}^n$ in $ \Omega $. 
 We propose the following problem:

How to design low-rank approximation models based on observational data to accelerate reconstruction without inverse crime?
\end{problem}

Very recently, the authors proposed a well established adjoint-POD method for inverse problems of parabolic type \cite{ZhangZhang2024pod}. It takes snapshots from the adjoint system to construct POD basis functions, which avoids the inverse crime. The point is the equivalence between the span of the snapshots from the forward system and that of its adjoint system. 

Compared to the existing work, our main contributions include:
\begin{enumerate}
    \item The condition for the equivalence between the span of the snapshots from the forward system and that of its adjoint system is relaxed (Lemma 2.2 in \cite{ZhangZhang2024pod}). Specifically, the number of terms $L$ for finite-dimensional truncation does not need to be less than (or equal to) the number of snapshots $M$.
    \item This is a continuation of our previous work \cite{LiZhang2025,LiZhang2024}. The theoretical foundation including stability and regularization methods are investigated in \cite{LiZhang2024}. Rigorous numerical analysis shows the way to balance discretization error, the noise, the regularization parameter, and the number of observation points \cite{LiZhang2025}. This paper extends the interest from stability, regularization, numerical approximation to fast algorithm. It provides a framework from theory to practice for linear inverse problems.
    
\end{enumerate} 
 The structure of the paper is as follows. In Section \ref{frame-forward}, we introduce the matrix form of the  model considered (\ref{eq-gov}), which inspires the discovery that the forward problem and the observation system share the same linear structure in the finite dimensional space. In Section \ref{sec-POD}, the error estimate is derived for the POD approaches for the forward problem with the unknown initial function. In Section \ref{sec-num}, numerical experiments are carried out to verify the theoretical results.

\section{The relationship between the forward problem and the observation system}\label{frame-forward}
A numerical framework is presented in our previous work \cite{LiZhang2025}. Following the idea, we start our fast algorithm investigation in this paper. Let $\nu=\alpha/2$, model (\ref{eq-gov}) is transformed into the following form by using the order reduction method \cite{LiZhang2025}  
\begin{equation}\label{eq-gov-trans}
\begin{cases}
  \partial_t^\nu v - \Delta u = a_1(x)\omega_{2-\alpha}(t), & (x,t) \in \Omega \times (0,T), \\
  v  = \partial_t^\nu u, & (x,t) \in \Omega \times (0,T), \\
  u(x,0) = v(x,0)=0, & x \in \Omega, \\
  u(x,t) = v(x,t)=0, & (x,t) \in \partial \Omega \times (0,T),
\end{cases}
\end{equation}
where $\omega_{2-\alpha}(t)=\frac{t^{1-\alpha}}{\Gamma{(2-\alpha)}}$, the source term $f(x,t)=0$ and the initial value $a_0(x)=0$ are consistent with that in \cite{LiZhang2025,LiZhang2024}.

\subsection{Matrix form of time semi-discrete scheme}
Here, the L1 formula (\ref{L1}), one of the most classical discrete methods, is used to approximate the Caputo derivative at nonuniform meshes $\{t_n|0=t_0<t_1<\dots <t_N=T\}$. To overcome the initial singularity problem, we use graded meshes $t_n=T(n/N)^r$, $r\ge1$ \cite{LiZhang2025}. Denote $\tau_n=t_n-t_{n-1}$, $n\geq1$.
\begin{align}\nonumber
\bar\partial_t^\nu v(t_n)
:&= \sum_{k=1}^{n}\int_{t_{k-1}}^{t_k}\omega_{1-\nu}(t_n-s)\frac{v(t_k)-v(t_{k-1})}{\tau_k}ds\\ \label{L1}
&=\sum_{k=1}^nA_{n-k}^{(n)}\nabla_\tau v(t_k),
\end{align}
where $\nabla_\tau v(t_k)=v(t_k)-v(t_{k-1})$, $A_{n-k}^{(n)}:=\int_{t_{k-1}}^{t_k}\frac{\omega_{1-\nu}(t_n-s)}{\tau_k}ds$. The corresponding numerical scheme (\ref{num-scheme})  is as following:
\begin{equation}\label{num-scheme}
\begin{cases}
  \bar\partial_t^\nu V^n - \Delta U^n = a_1(x)\omega_{2-\alpha}(t_n), & 1\le n \le N, \\
  V^n  = \bar\partial_t^\nu U^n, & 1\le n \le N, \\
  U(x,0) = V(x,0)=0, & x \in \Omega, \\
  U(x,t) = V(x,t)=0, & (x,t) \in \partial \Omega \times (0,T),
\end{cases}
\end{equation}
where $U$ and $V$ are numerical solutions corresponding to $u$ and $v$ in (\ref{eq-gov-trans}). 

Denote the eigenvalue system of the operator $-\Delta$ as $\{\mu_m,\phi_m\}_{m=1}^{\infty}$, where eigenfunctions $\{\phi_m\}_{m=1}^\infty$ forming an orthogonal basis of $L^2(\Omega)$ with eigenvalue $\{\mu_m\}_{m=1}^\infty$. Taking inner product with $\phi_m$ of (\ref{num-scheme}), it arrives that
\begin{equation}\label{num-scheme-c}
\begin{cases}
  \bar\partial_t^\nu V_m^n + \mu_m U_m^n = (a_1)_m\omega_{2-\alpha}(t_n), & 1\le n \le N, \\
  V_m^n  = \bar\partial_t^\nu U_m^n, & 1\le n \le N, 
\end{cases}
\end{equation}
where $\xi_m$ mean $(\xi,\phi_m)_{L^2(\Omega)}$, $\forall$ $\xi$ $\in$ $L^2(\Omega)$.

Let 
$$
\boldsymbol{A}=\left[
\begin{array}{cccc}
    A_0^{(1)}                      &             &          &            \\
   (A_1^{(2)}-A_0^{(2)})           &  A_0^{(2)}  &          &            \\
    \vdots                         & \vdots      & \ddots   &            \\
    (A_{N-1}^{(N)}-A_{N-2}^{(N)})  &\cdots        &\cdots   & A_0^{(N)}  
\end{array}
\right],
$$
$
\boldsymbol{V_m}=\left[
\begin{array}{c}
   V_m^1           \\
   V_m^2           \\
    \vdots         \\
   V_m^N    
\end{array}
\right]
$,
$
\boldsymbol{U_m}=\left[
\begin{array}{c}
   U_m^1           \\
   U_m^2           \\
    \vdots          \\
   U_m^N    
\end{array}
\right]
$,
$
\boldsymbol{\omega}=\left[
\begin{array}{c}
   \omega_{2-\alpha}(t_1)\\
   \omega_{2-\alpha}(t_2)\\
   \vdots\\ 
   \omega_{2-\alpha}(t_N)    
\end{array}
\right]
$,
and
$\boldsymbol{\mu_m}=\mu_m\boldsymbol{I_{N\times N}}$,
where $\boldsymbol{I}$ is the identity matrix. For (\ref{num-scheme-c}), the matrix form of it is as follows:
\begin{equation}\label{matrix-form}
\left[
\begin{array}{cc}
\boldsymbol{A} & \boldsymbol{\mu_m}\\
\boldsymbol{I} & \boldsymbol{-A}
\end{array}
\right]\left[
\begin{array}{c}
\boldsymbol{V_m} \\
\boldsymbol{U_m} 
\end{array}
\right]
=(a_1)_m\left[
\begin{array}{c}
\boldsymbol{\omega} \\
\boldsymbol{0} 
\end{array}
\right].
\end{equation}

\subsection{Data mollification}\label{data-moli}
We consider the observation data $q=u(t_N)+\sigma \epsilon$, $\epsilon$ is generated by standard Gaussian distribution, $\sigma$ denotes the noise level. 
A Tikhonov regularization method is proposed to denoise the observation data $\{q(x_i)\}_{i=1}^m$:
\begin{align*}
q^*=\min\limits_{q^r}\frac{1}{m}\sum_{i=1}^m|q^r(x_i)-q(x_i)|^2+\lambda q^rA(q^r)^T,    
\end{align*}
where $A$ is a positive definite matrix with a regularization parameter $\lambda$. For quasi-uniform distribution data, the estimate of $\frac{1}{m}\sum_{i=1}^m|q^*(x_i)-u(x_i,t_N)|^2$ is given. 
\begin{lemma}(\cite{CenZhang:2025})\label{data-denoise}
When $\rho_1\geq\rho_2\geq\cdots$, of the eigenvalue problem
\begin{align*}
\varphi A^{-1}u=\rho(\varphi,u),~~\forall u\in \mathbb{R}^m,    
\end{align*}
satisfy that $\rho_k\leq Ck^{-\alpha}$, $k=1,2,\cdots,m$ and the corresponding eigenvectors form an orthonormal basis respect to the inner product, one has
\begin{align*}
\mathbb{E}\big[\frac{1}{m}\sum_{i=1}^m|q^*(x_i)-u(x_i,t_N)|^2\big]\leq C\lambda\|q^*\|_{A}^2+C\frac{\sigma^2}{m\lambda^{1/\alpha}},
\end{align*}
where $\|q^*\|_{A}^2=q^*A(q^*)^T$.
\end{lemma}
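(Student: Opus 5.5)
The natural route is a bias--variance splitting in the spectral basis of the regularizing matrix. Write $u=(u(x_1,t_N),\dots,u(x_m,t_N))$ for the exact data vector and $q=u+\sigma\epsilon$. We use the discrete inner product $(\varphi,\psi)_m=\frac1m\sum_i\varphi_i\psi_i$, so the quantity to bound is $\mathbb{E}\|q^*-u\|_m^2$. We also read the hypothesis in the usual way: there is a basis $\{\psi_k\}_{k=1}^m$, orthonormal in $(\cdot,\cdot)_m$, that diagonalizes $A$ with respect to this inner product, and the eigenvalue problem gives $\psi_k^TA\psi_k$ proportional to $\rho_k^{-1}$ (with the normalization absorbed into constants).

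\textbf{Step 1: closed form of the minimizer.} The first-order condition is
\begin{align*}
\tfrac1m(q^*-q)+\lambda A q^{*T}=0 ,
\end{align*}
so $q^*=(I+m\lambda A)^{-1}q$. Expand $u=\sum_k u_k\psi_k$ and $\epsilon=\sum_k\epsilon_k\psi_k$, with coefficients taken in $(\cdot,\cdot)_m$. Because the $\psi_k$ are orthonormal in $(\cdot,\cdot)_m$ and $\epsilon$ is standard Gaussian, $\mathbb{E}\epsilon_k^2=1/m$. In this basis the solution operator is diagonal:
\begin{align*}
q^*_k=\frac{\rho_k}{\rho_k+\lambda}\,q_k
\end{align*}
(up to a harmless rescaling of $\lambda$).

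\textbf{Step 2: split into bias and noise.} The error coefficients are
\begin{align*}
q^*_k-u_k=-\frac{\lambda}{\rho_k+\lambda}u_k+\frac{\rho_k}{\rho_k+\lambda}\sigma\epsilon_k .
\end{align*}
Taking expectations, the cross term vanishes, which leaves
\begin{align*}
\mathbb{E}\|q^*-u\|_m^2=\sum_k\frac{\lambda^2}{(\rho_k+\lambda)^2}u_k^2+\frac{\sigma^2}{m}\sum_k\frac{\rho_k^2}{(\rho_k+\lambda)^2}.
\end{align*}

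\textbf{Step 3: the bias term.} Since $\lambda^2/(\rho_k+\lambda)^2\le\lambda/\rho_k$, the bias is at most $\lambda\sum_k\rho_k^{-1}u_k^2=\lambda\|u\|_A^2$. The statement is phrased with $\|q^*\|_A$ instead. The standard way to get there is to use the minimizing property: compare the functional at $q^*$ with its value at $u$, which is an energy argument. Alternatively, one can bound $\|u\|_A$ by $\|q^*\|_A$ plus controlled terms, in the same spirit as the estimates in \cite{CenZhang:2025}. I expect this conversion to be the main obstacle, together with making sure that the $A$-norm convention matches the eigen-normalization. My first attempt will be the energy argument: test the optimality condition against $q^*-u$ to obtain
\begin{align*}
\|q^*-u\|_m^2+\lambda\|q^*\|_A^2\le\lambda(Aq^{*T},u)+(\sigma\epsilon,q^*-u)_m ,
\end{align*}
and then absorb terms with Young's inequality.

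\textbf{Step 4: the variance term.} With $\rho_k\le Ck^{-\alpha}$, we split the sum at $k_0=\lambda^{-1/\alpha}$. For $k\le k_0$ we bound each term by $1$; for $k>k_0$ we bound each term by $\rho_k^2/\lambda^2\le C k^{-2\alpha}\lambda^{-2}$, and the tail sum is $O(\lambda^{-1/\alpha})$ because $2\alpha>1$. Hence
\begin{align*}
\sum_k\frac{\rho_k^2}{(\rho_k+\lambda)^2}\le C\lambda^{-1/\alpha},
\end{align*}
and the noise contribution is $C\sigma^2/(m\lambda^{1/\alpha})$. Adding Steps 3 and 4 gives the claimed estimate.
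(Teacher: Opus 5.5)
Your Steps 1, 2 and 4 are sound, and the first half of Step 3 already yields
\[
\mathbb{E}\|q^*-u\|_m^2\le\lambda\|u\|_A^2+C\frac{\sigma^2}{m\lambda^{1/\alpha}} .
\]
The gap is the step you leave open: replacing $\|u\|_A$ by $\|q^*\|_A$. It cannot be done, because with $q^*$ the Tikhonov minimizer the stated inequality is false. Take $\sigma=0$ in your own spectral formulas, so that $q^*_k=\frac{\rho_k}{\rho_k+\lambda}u_k$ and
\[
\|q^*-u\|_m^2=\sum_k\frac{\lambda^2}{(\rho_k+\lambda)^2}u_k^2,\qquad
\lambda\|q^*\|_A^2=\sum_k\frac{\lambda\rho_k}{(\rho_k+\lambda)^2}u_k^2 .
\]
For $u=\psi_k$ the ratio of the left-hand side to $C\lambda\|q^*\|_A^2$ is $\lambda/(C\rho_k)$, which is unbounded. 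You can let $\lambda\to\infty$, or fix $\lambda$ and take $k=m\to\infty$ using $\rho_m\le Cm^{-\alpha}$. Put differently, heavy regularization drives $q^*\to0$ with $\lambda\|q^*\|_A^2=O(\lambda^{-1})$, while the error tends to $\|u\|_m^2>0$. Your energy argument runs into exactly this. Testing the optimality condition with $q^*-u$ gives an identity, not just an inequality. Young's inequality on $\lambda(Aq^{*T},u)$ can then only return $\frac{\lambda}{2}\|u\|_A^2$, and no manipulation removes $u$ from the right-hand side.

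The statement only makes sense if $\|q^*\|_A$ on the right is read as the $A$-norm of the exact data vector $(u(x_i,t_N))_{i=1}^m$. That is also how the paper uses it: it treats $\|q^*\|_A$ as a fixed number when it balances the two terms to get $\lambda^{1+1/\alpha}=O(\sigma^2m^{-1})\|q^*\|_A^{-2}$. The lemma is only quoted from \cite{CenZhang:2025}, with no proof given here. Under that reading, drop the energy argument: $\lambda^2/(\rho_k+\lambda)^2\le\lambda/\rho_k$ finishes Step 3 and your proof is complete.

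Two smaller repairs remain. First, the normalization factors you ``absorb into constants'' are powers of $m$, which is not a constant. If the inner product in the eigenproblem is Euclidean (resp.\ $\frac1m$-weighted), the exact filter is $\rho_k/(\rho_k+m\lambda)$ (resp.\ $\rho_k/(\rho_k+m^2\lambda)$). The bias is still bounded by exactly $\lambda\|u\|_A^2$, and the variance sum is at most $C(m\lambda)^{-1/\alpha}$ (resp.\ $C(m^2\lambda)^{-1/\alpha}$), which is no larger than $C\lambda^{-1/\alpha}$. So the rescaling is harmless because it goes in the favorable direction, and you should say so explicitly. Second, Step 4 needs the decay exponent to exceed $1/2$, which holds for $\alpha\in(1,2)$. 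When $\lambda\ge1$ the split point satisfies $k_0<1$, so treat that case separately using $\sum_k\rho_k^2\lambda^{-2}\le C\lambda^{-2}\le C\lambda^{-1/\alpha}$.
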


From Lemma \ref{data-denoise}, we have the optimal choice of the parameter $\lambda^{1+1/\alpha}=O(\sigma^2m^{-1})\|q^*\|_A^{-2}$. It implies that $\mathbb{E}\big[\frac{1}{m}\sum_{i=1}^m|q^*(x_i)-u(x_i,t_N)|^2\big]\rightarrow0$ as the number of observation data $m\rightarrow \infty$. In practical applications, increasing the number of observation points is a natural way to overcome the effects of noise.

For $q^*\in H^k(\Omega)$, its error estimate in $L^2(\Omega)$ can be bounded by the discrete $l^2$-norm, see Lemma \ref{lem-u-un}.
\begin{lemma}\cite[Theorems 3.3 and 3.4]{utreras1988convergence}
\label{lem-u-un}
There exists a constant  $C>0$  such that for all  $  u \in H^k(\Omega)$ with $k>\frac{d}2$ , the following estimates are valid:
\begin{equation*}
\begin{aligned}
\| u \|^2_{L^2(\Omega)} \leq& C \left( \| u \|^2_n + n^{-\frac{2k}{d}} \| u \|^2_{H^k(\Omega)} \right),
\end{aligned}
\end{equation*}
where $\| u \|^2_n:=\frac{1}{n}\sum_{i=1}^n|u(x_i)|^2$.
\end{lemma}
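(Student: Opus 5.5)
The plan is to prove this sampling inequality by a local polynomial argument on cells of size $h\sim n^{-1/d}$, in the spirit of Utreras and of the later "zeros lemma" literature. Throughout I assume, as the surrounding discussion does, that the points $\{x_i\}_{i=1}^n$ are quasi-uniformly distributed in $\Omega$. Concretely, the fill distance $h=\sup_{x\in\Omega}\min_i|x-x_i|$ and the separation distance are both comparable to $n^{-1/d}$, with constants independent of $n$.

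\textbf{Step 1: partition.} Since $\partial\Omega$ is smooth, $\Omega$ satisfies an interior cone condition. For some fixed $K\ge1$, I partition $\Omega$ up to a null set into cells $\{Q_j\}$ with the following properties. Each cell has diameter $\le Kh$ and contains a ball (or cone) of radius $\ge ch$. Each cell holds a set of sample points $X_j\subset\{x_i\}$ that is a \emph{norming set} for $\mathbb{P}_{k-1}$, the polynomials of degree at most $k-1$:
\[
\|p\|_{L^\infty(Q_j)}\le 2\max_{x_i\in X_j}|p(x_i)|\quad\text{for all } p\in\mathbb{P}_{k-1}.
\]
The existence of such cells, with $K$ depending only on $k,d$ and the cone parameters, follows from the standard local polynomial reproduction lemma (e.g.\ Wendland's norming-set argument). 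Quasi-uniformity then gives $\#X_j\le C$ and $|Q_j|\le Ch^d\le C/n$. Each point belongs to a bounded number of the sets $X_j$.

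\textbf{Step 2: local estimate.} On each cell let $p_j\in\mathbb{P}_{k-1}$ be the averaged Taylor polynomial of $u$. Because $k>d/2$, the scaled Bramble--Hilbert lemma together with the Sobolev embedding $H^k\subset C^0$ gives
\[
\|u-p_j\|_{L^\infty(Q_j)}\le Ch^{k-d/2}|u|_{H^k(Q_j)}.
\]
Hence
\[
\|u\|_{L^2(Q_j)}^2\le 2|Q_j|\,\|u-p_j\|_{L^\infty(Q_j)}^2+2|Q_j|\,\|p_j\|_{L^\infty(Q_j)}^2 .
\]
The first term is bounded by $Ch^{2k}|u|^2_{H^k(Q_j)}$. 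For the second term I use the norming property and then $|p_j(x_i)|\le |u(x_i)|+\|u-p_j\|_\infty$, which gives
\[
|Q_j|\,\|p_j\|_{L^\infty(Q_j)}^2\le Ch^d\sum_{x_i\in X_j}|u(x_i)|^2+Ch^{2k}|u|^2_{H^k(Q_j)} .
\]

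\textbf{Step 3: summation.} I sum over $j$ and use the bounded overlap of the $X_j$ together with $h^d\sim 1/n$ and $h^{2k}\sim n^{-2k/d}$. This yields
\[
\|u\|^2_{L^2(\Omega)}\le C\Big(\tfrac1n\sum_{i=1}^n|u(x_i)|^2+n^{-2k/d}\|u\|^2_{H^k(\Omega)}\Big),
\]
which is the claimed inequality.

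The main obstacle is Step 1 near $\partial\Omega$. Boundary cells must still contain a uniformly scaled cone, so that both the Bramble--Hilbert constant and the norming constant are uniform in $j$ and $n$. I would handle this by choosing cells as intersections of $\Omega$ with cubes of side $Kh$, merging thin boundary pieces into neighbours, and exploiting the uniform cone condition of the smooth domain. The constant $K$ is then chosen large enough that every cell contains enough well-spread points to be unisolvent for $\mathbb{P}_{k-1}$.
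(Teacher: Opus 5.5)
The paper gives no argument of its own for this lemma. It quotes the inequality from Utreras \cite{utreras1988convergence}, where it comes from Duchon-type local estimates on a covering of $\Omega$ by pieces of size comparable to the fill distance. Your proposal is a self-contained version of that mechanism: averaged Taylor polynomial and Bramble--Hilbert for the $n^{-2k/d}$ term, a norming set to pass from $\|p_j\|_{L^\infty(Q_j)}$ to point values, then summation. Given the cell properties, Steps 2 and 3 are correct.

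What your route buys over the citation is that the hypotheses become visible. With $C$ independent of $n$, the lemma as printed is false without some condition on $\{x_i\}$: cluster the points in a subregion and take $u$ supported away from them, and the right-hand side tends to $0$ as $n\to\infty$. Your argument shows that the only property actually used is the fill-distance bound $h\le Cn^{-1/d}$. The separation distance and $\#X_j\le C$ are superfluous, since you bound a maximum by a sum.

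Two small additions help. First, the construction needs $h\le h_0$, i.e.\ $n\ge n_0$. The finitely many remaining $n$ are trivial because the right-hand side carries the full $H^k$ norm. Second, you do not need a partition. You only use $\|u\|^2_{L^2(\Omega)}\le\sum_j\|u\|^2_{L^2(Q_j)}$ and $\sum_j|u|^2_{H^k(Q_j)}\le M|u|^2_{H^k(\Omega)}$, so a covering with overlap at most $M$ suffices. Sets $\Omega\cap B(t,Kh)$ that are star-shaped with respect to balls $B(t,c_0Kh)\subset\Omega$ do the job. For a smooth boundary and small $h$ they are easy to produce, because $\partial\Omega$ is locally a graph with small Lipschitz constant. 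This removes the boundary-merging step you single out as the main obstacle.

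The one point that must be corrected is the cell geometry in Step 1. ``Diameter $\le Kh$ and containing a ball of radius $ch$'' does not give a uniform Bramble--Hilbert constant; that needs star-shapedness with respect to the ball. Your boundary sketch only asks for unisolvence, which does not give a uniform norming constant either. For example, take $d=k=2$ and a $Kh\times 2ch$ rectangle with $c<1/2$. It is convex and contains a ball of radius $ch$, yet all its sample points may lie on its midline without violating the fill-distance bound. The linear polynomial vanishing on that line is then invisible to them.

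The right requirement is that each cell be star-shaped with respect to a ball of radius at least $c_0Kh$, with $c_0$ independent of $K$. Then:
\begin{itemize}
\item every point of the cell is within $C(c_0)h$ of a sample point in the cell, along a segment inside the cell;
\item the Markov inequality gives $|\nabla p|\le C(c_0)k^2(Kh)^{-1}\|p\|_{L^\infty}$ on the cell;
\item taking $K$ large in terms of $k$ and $c_0$ yields your norming constant $2$.
\end{itemize}
Wendland's cones of radius $Kh$ and fixed aperture satisfy exactly this. So the fix is to state the cell requirement this way; the rest of your argument goes through unchanged.
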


\subsection{The observation system}
For the given final time measurement $q(x)$, we present the observation system (\ref{eq-adjoint}) of the forward problem (\ref{eq-gov-trans}) as follows:
\begin{equation}\label{eq-adjoint}
\begin{cases}
  \partial_t^\nu  \widetilde{v} - \Delta  \widetilde{u} = q(x)\omega_{2-\alpha}(t), & (x,t) \in \Omega \times (0,T), \\
   \widetilde{v}  = \partial_t^\nu  \widetilde{u}, & (x,t) \in \Omega \times (0,T), \\
   \widetilde{u}(x,0) =  \widetilde{v}(x,0)=0, & x \in \Omega, \\
   \widetilde{u}(x,t) =  \widetilde{v}(x,t)=0, & (x,t) \in \partial \Omega \times (0,T).
\end{cases}
\end{equation}
Repeating the process from (\ref{L1}) to (\ref{matrix-form}), we get 
\begin{equation*}
\left[
\begin{array}{cc}
\boldsymbol{A} & \boldsymbol{\mu_m}\\
\boldsymbol{I} & \boldsymbol{-A}
\end{array}
\right]\left[
\begin{array}{c}
\boldsymbol{ \widetilde{V}_m} \\
\boldsymbol{ \widetilde{U}_m} 
\end{array}
\right]
=q_m\left[
\begin{array}{c}
\boldsymbol{\omega} \\
\boldsymbol{0} 
\end{array}
\right].
\end{equation*}
Furthermore, non-singular linear system (\ref{matrix-form}) gives that
\begin{equation}\label{matrix-form-adjoint-1}
\left[
\begin{array}{c}
\boldsymbol{ V_m} \\
\boldsymbol{ U_m} 
\end{array}
\right]=\frac{(a_1)_m}{q_m}\left[
\begin{array}{c}
\boldsymbol{ \widetilde{V}_m} \\
\boldsymbol{ \widetilde{U}_m} 
\end{array}
\right], ~q_m\neq 0.
\end{equation}

The case of noise-free ($\sigma=0$) is discussed firstly.
From \cite{LiZhang2024}, we have
\begin{equation}\label{matrix-form-adjoint-2}
u_m(t_N)=t_NE_{\alpha,2}(-\mu_m t_N^\alpha)(a_1)_m, 
\end{equation}
where $E_{\alpha,2}(-\mu_m t_N^\alpha)=\sum_{n=0}^\infty\frac{(-\mu_mt_N^\alpha)^n}{\Gamma{(n\alpha+2)}}$.
From (\ref{matrix-form-adjoint-2}), we try to give the bounds estimate of $\big|\frac{(a_1)_m}{q_m}\big|$ in (\ref{matrix-form-adjoint-1}). 
\begin{lemma}\label{bounds}
If the noise level $\sigma=0$ and $q_m\neq0$, then there exists constant $C$ such that
$$Cm^{\frac{2}{d}}\le\big|\tfrac{(a_1)_m}{q_m}\big|\le Cm^{\frac{2}{d}},~ m\ge1.$$
\begin{proof}
Based on inequity $|E_{\alpha,2}(-\mu_m t_N^\alpha)|\geq \frac{C}{1+\mu_mt_N^\alpha}$ in \cite{podlubny1999fractional} and the property of eigenvalue $\mu_m\sim m^{\frac{2}{d}}$, $m\ge 1$, one has
\begin{align}\nonumber
\bigg|\frac{(a_1)_m}{q_m}\bigg|
=& \bigg|\frac{1}{t_NE_{\alpha,2}(-\mu_m t_N^\alpha)}\bigg| \\ \nonumber
\leq &C(1+\mu_mt_N^\alpha)\\ \label{matrix-form-adjoint-3}
\leq &Cm^{\frac{2}{d}}.
\end{align}
On the other hand, by using inequity $|E_{\alpha,2}(-\mu_m t_N^\alpha)|\le \frac{C}{1+\mu_mt_N^\alpha}$, we have
$\big|\tfrac{(a_1)_m}{q_m}\big|
\ge Cm^{\frac{2}{d}}$.    
\end{proof}   
\end{lemma}

\begin{lemma}\label{map}By Lemma \ref{bounds}, linear systems (\ref{matrix-form}) and (\ref{matrix-form-adjoint-1}) are discussed in Sobolev spaces.
Denote operator $S:U\rightarrow \widetilde{U}$, if $U\in L^2(\Omega)$, then $\widetilde{U}\in H^2(\Omega)\cap H_0^1(\Omega)$.
\begin{proof}
\begin{align*}
\|U\|_{L^2}^2:=\sum_{m=1}^\infty|U_m|^2=\sum_{m=1}^\infty\big|\tfrac{(a_1)_m}{q_m}\big|^2|\widetilde{U}_m|^2\sim\sum_{m=1}^\infty m^{\frac{4}{d}}|\widetilde{U}_m|^2\sim\|\widetilde{U}\|_{H^2}^2.
\end{align*}   
The proof is completed by the definition of the eigenvalue system of the operator $-\Delta$ and Lemma \ref{bounds}.
\end{proof}   
\end{lemma}

\begin{remark}\label{bound-noise}
Since $q(x)$ is given in the continuous space $\Omega$, we could take enough observation points to perform data mollification so that the factor $\frac{(a_1)_m}{q_m}$ satisfies the Lemma \ref{bounds}. Then Lemma \ref{map} also holds for noise data. 
\end{remark}

\iffalse
In practical calculations, extracting the singular term $\omega_{2-\alpha}(t)$ could improve the accuracy of the algorithm. Since the final time measurement without noise is a smooth function, scheme (\ref{eq-adjoint-trans}) is prefer.
\begin{remark}
The adjoint system (\ref{eq-adjoint}) can be transfer into the following form:
\begin{equation}\label{eq-adjoint-trans}
\begin{cases}
  \partial_t^\nu \textbf{v}  - \Delta \textbf{u} = t\Delta q(x), & (x,t) \in \Omega \times (0,T), \\
  \textbf{v}  = \partial_t^\nu \textbf{u}, & (x,t) \in \Omega \times (0,T), \\
  \textbf{u}(x,0) = \textbf{v}(x,0)=0, & x \in \Omega, \\
  \textbf{u}(x,t) = \textbf{v}(x,t)=0, & (x,t) \in \partial \Omega \times (0,T),
\end{cases}
\end{equation}
where $\widetilde{u}=\textbf{u}+tq(x)$. For more details see \cite{LiZhang2025}.
\end{remark}
\fi

In the following, we show that the solution could be approximated by a finite-dimensional truncation. We take $m=1,\cdots,L$ in (\ref{matrix-form-adjoint-1}), then the numerical solutions of the forward problem and the observation system share the same finite-dimensional linear space.
\begin{remark}\label{re-eq} From the well-posedness of (\ref{eq-gov}) \cite{LiZhang2025,LiZhang2024}, we know that $u(\cdot,t)\in H^2(\Omega)$, $t\in \mathbb{R}^+$.
Let $u^L:=\sum_{m=1}^{L}u_m\phi_m$, we have 
$$\|u-u^L\|_{L^2}^2=\sum_{m=L+1}^{\infty}|u_m|^2\leq \mu_{L+1}^{-2}\sum_{m=1}^{\infty}\mu_m^2|u_m|^2=\mu_{L+1}^{-2}\|\Delta u\|_{L^2}^2,$$
where $\mu_{L+1}\sim (L+1)^{\frac{2}{d}}$. 
Similarly, we have $\|v-v^L\|_{L^2}^2\leq \mu_{L+1}^{-2}\|\Delta v\|_{L^2}^2$.
The result also holds for the solution of the observation system.
\end{remark}

\section{Proper orthogonal decomposition}\label{sec-POD}
In this section, we investigate the approximation properties of solutions in the POD space generated by the observation system.
Our aim is to derive an error estimate for the solution of the forward problem and its corresponding POD space that snapshots come from the observation system (\ref{eq-adjoint}) but not the forward problem. 

First, we recall the general framework of POD. For $N\in\mathbb{N}$, let $\{y_n\}_{n=1}^N\subset X:=H_0^1(\Omega)$ be an ensemble of snapshots. The POD basis functions $\{\psi_j\}_{j=1}^r$ are constructed by minimizing the following projection error:
\begin{align*}
 \frac{1}{N}\sum_{n=1}^N\|y_n-\sum_{j=1}^r(y_n,\psi_j)_X\psi_j\|_{X}^2.   
\end{align*}
On the other hand, it can be reduced to the following eigenvalue problem:
$$Kv=\lambda v,$$
where the correlation matrix $K$ is calculated from the snapshots $\{y_n\}_{n=1}^N$ with the entries $K_{ij}=\frac{1}{N}(y_i,y_j)_X$, and $K$ is symmetric and semi-positive definite. The corresponding approximation error is presented \cite{Sirovich1987}.

\begin{lemma}
Let $\lambda_1\ge \lambda_2\ge \cdots\ge \lambda_r>0$ be the positive eigenvalues of the correlation matrix $K$ and $v_1,\cdots,v_r\in\mathbb{R}^N$ be the corresponding orthonormal eigenvectors. Then a POD basis of rank $m\le r$ is given by
$$\psi_j=\frac{1}{\sqrt{\lambda_j}}\sum_{n=1}^N(v_j)^ny_n,$$
where $(v_j)^n$ denotes the $n$-th component of the eigenvector $v_j$. Moreover, the error is given by
\begin{align*}
 \frac{1}{N}\sum_{n=1}^N\|y_n-\sum_{j=1}^r(y_n,\psi_j)_X\psi_j\|_{X}^2=\sum_{j=m+1}^r\lambda_j.   
\end{align*}
\end{lemma}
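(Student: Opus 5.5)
The statement to prove is the classical POD lemma: the functions $\psi_j=\lambda_j^{-1/2}\sum_{n}(v_j)^n y_n$ are $X$-orthonormal, and the averaged projection error onto their span equals the sum of the discarded eigenvalues. I read the projection sum as running to the rank $m$, not to $r$. With upper limit $r$ the left-hand side would vanish, since all snapshots lie in the span of $\psi_1,\dots,\psi_r$; that is consistent with the right-hand side only when $m=r$.

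\emph{Step 1: $X$-orthonormality.} Let $Y:\mathbb{R}^N\to X$ be the synthesis map $Yc=\sum_n c^n y_n$. Its Hilbert adjoint is $Y^*w=((w,y_n)_X)_{n=1}^N$, and $Y^*Y=NK$. Then $\psi_j=\lambda_j^{-1/2}Yv_j$, and
$$(\psi_i,\psi_j)_X=\frac{(Y^*Yv_i,v_j)_{\mathbb{R}^N}}{\sqrt{\lambda_i\lambda_j}}=\frac{N\lambda_i (v_i,v_j)}{\sqrt{\lambda_i\lambda_j}}=N\delta_{ij}.$$
So with the stated normalization the $\psi_j$ are orthogonal with $\|\psi_j\|_X^2=N$. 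To make them orthonormal, the scaling must absorb the factor $N$. I would either use $\psi_j=(N\lambda_j)^{-1/2}\sum_n (v_j)^n y_n$, or keep $\psi_j$ as stated and take the eigenpairs of the unnormalized Gram matrix $NK$. I will work with the orthonormalized version and note that this is the intended meaning.

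\emph{Step 2: coefficients and error identity.} Since the $\psi_j$ are orthonormal,
$$\frac1N\sum_n\Big\|y_n-\sum_{j\le m}(y_n,\psi_j)_X\psi_j\Big\|_X^2=\frac1N\sum_n\|y_n\|_X^2-\frac1N\sum_{j\le m}\sum_n|(y_n,\psi_j)_X|^2 .$$
The first term is $\operatorname{tr}K=\sum_{j\le r}\lambda_j$. For the second, $((y_n,\psi_j)_X)_n=Y^*\psi_j=(N\lambda_j)^{-1/2}\,NKv_j=\sqrt{N\lambda_j}\,v_j$. Its squared Euclidean norm is $N\lambda_j$, so after dividing by $N$ it contributes $\lambda_j$. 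Subtracting gives $\sum_{j=m+1}^r\lambda_j$.

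\emph{Step 3: optimality (optional).} Optimality, meaning that this basis minimizes the error among all $X$-orthonormal families of size $m$, is not asserted in the displayed equation. If needed, I would prove it from the same identity. For an arbitrary orthonormal family, the retained energy is $\frac1N\sum_j\|Y^*\phi_j\|^2=\sum_j(YY^*\phi_j,\phi_j)/N$, a Rayleigh-trace of the finite-rank operator $YY^*/N$. That operator has the same nonzero spectrum as $K$, so Ky Fan's maximum principle bounds the retained energy by $\sum_{j\le m}\lambda_j$.

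\emph{Main obstacle.} The only real subtlety is bookkeeping: the normalization by $N$, and the upper limit in the projection sum. I would resolve both explicitly as above rather than through any deeper estimate.
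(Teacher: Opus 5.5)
Your argument is correct, and it does more than the paper: the paper gives no proof of this lemma and simply quotes the method-of-snapshots result from \cite{Sirovich1987}. Your route is the standard self-contained derivation. You introduce the synthesis map $Y$, use $Y^*Y=NK$ to get $X$-orthogonality of the $\psi_j$, and then combine the Pythagorean identity with $\operatorname{tr}K=\sum_{j\le r}\lambda_j$ and $Y^*\psi_j=\sqrt{N\lambda_j}\,v_j$.

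It also gains something the citation does not: it shows that the lemma as printed is not literally true. With $K_{ij}=\frac1N(y_i,y_j)_X$, the stated $\psi_j$ satisfy $\|\psi_j\|_X^2=N$. So either the factor must be $(N\lambda_j)^{-1/2}$, or $K$ must be the unnormalized Gram matrix. And if the inner sum runs to $r$, the left-hand side vanishes, because the normalized $\psi_1,\dots,\psi_r$ span the snapshot space; so the sum must run to $m$. Your corrected reading is the one the paper actually relies on later, for instance in (\ref{pod-1}), where the projection error onto the $m$-dimensional space $X_h^m$ is bounded by $C\sum_{j=m+1}^r\lambda_j$.

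One adjustment: your Step 3 is not really optional. The paper defines the POD basis as a minimizer of the averaged projection error, so ``a POD basis of rank $m$ is given by'' asserts optimality, not just the error identity. Your Ky Fan argument supplies exactly this. It applies to the finite-rank, self-adjoint, positive semidefinite operator $\frac1N YY^*$, whose nonzero eigenvalues coincide with those of $K$, multiplicities included. Keep that argument as part of the proof rather than as an aside.
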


\subsection{POD approach for the observation system}
For the sake of completeness, we first introduce the finite element method and then give the relevant POD scheme.
A fully discrete scheme is as follows: find $\widetilde{U}_h^n$, $\widetilde{V}_h^n\in X_h$ for $n=1$, 2, $\cdots$, $N$
\begin{equation}\label{num-scheme-fed}
\begin{cases}
  (\bar\partial_t^\nu \widetilde{V}_h^n,\varphi) + (\nabla \widetilde{U}_h^n,\nabla \varphi) = (q(x)\omega_{2-\alpha}(t_n),\varphi), & \forall \varphi\in X_h, \\
  (\nabla\widetilde{V}_h^n,\nabla\varphi)  = (\bar\partial_t^\nu \nabla\widetilde{U}_h^n,\nabla\varphi),
\end{cases}
\end{equation}
where $X_h$ is the associated continuous piecewise linear finite element space. 
For model reduction, the POD methodology is adopted to (\ref{num-scheme-fed}). Taking fully discrete solutions $\{\widetilde{U}_h^n\}_{n=1}^N$ and fractional difference quotients $\{\bar\partial_t^\nu \widetilde{U}_h^n\}_{n=1}^N$, $\{\bar\partial_t^\nu \widetilde{V}_h^n\}_{n=1}^N$ as snapshots to generate an optimal orthonormal basis $\{\psi_j\}_{j=1}^r$. A numerical scheme is obtained using the POD space $X_h^m$, $m\le r$, spanned by the first $m$ POD basis functions. Find $U_m^n\in X_h^m$, $n=1,2$, $\cdots$, $N$ such that
\begin{equation}\label{num-scheme-pod}
\begin{cases}
  (\bar\partial_t^\nu \widetilde{V}_m^n,\varphi) + (\nabla \widetilde{U}_m^n,\nabla \varphi) = (q(x)\omega_{2-\alpha}(t_n),\varphi), & \forall \varphi\in X_h^m, \\
  (\nabla\widetilde{V}_m^n,\nabla\varphi)  = (\bar\partial_t^\nu \nabla\widetilde{U}_m^n,\nabla\varphi).
\end{cases}
\end{equation}

Denoting $e_m^n=\widetilde{U}_h^n-\widetilde{U}_m^n$, $z_m^n=\widetilde{V}_h^n-\widetilde{V}_m^n$, we give the error estimate for (\ref{num-scheme-pod}). Some lemmas are introduced.

\begin{lemma}\label{FD-ineq}
(\cite{LiaoH2018L1})For $V^n$, $1\leq n\leq N$, one has
\begin{align*}
(\bar\partial_t^\nu V^n,V^n)\geq \frac{1}{2}\bar\partial_t^\nu \|V^n\|_{L^2}^2.    
\end{align*}
\end{lemma}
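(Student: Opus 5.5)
The inequality to prove is $(\bar\partial_t^\nu V^n,V^n)\ge \frac12\bar\partial_t^\nu\|V^n\|_{L^2}^2$ for the L1 operator (\ref{L1}) on the graded mesh. The plan is to reduce it to a scalar identity, controlled by the sign structure of the coefficients $A_{n-k}^{(n)}$.

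First, rewrite the discrete derivative in convolution form:
\begin{align*}
\bar\partial_t^\nu V^n=\sum_{k=1}^n A_{n-k}^{(n)}\nabla_\tau V^k
=A_0^{(n)}V^n-\sum_{k=1}^{n-1}\bigl(A_{n-k-1}^{(n)}-A_{n-k}^{(n)}\bigr)V^k-A_{n-1}^{(n)}V^0 .
\end{align*}
Do the same for $\bar\partial_t^\nu\|V^n\|^2$, with $\|V^k\|^2$ in place of $V^k$.

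Next, I need the key property of the kernel: the coefficients are positive and decreasing, $A_0^{(n)}>A_1^{(n)}>\cdots>A_{n-1}^{(n)}>0$. This holds because $\omega_{1-\nu}(t_n-s)$ is positive and increasing in $s$. Hence $A_{n-k}^{(n)}$ is the average of an increasing function over $[t_{k-1},t_k]$, and that interval moves to the right as $k$ grows. This is valid on an arbitrary nonuniform mesh, including the graded mesh, so no mesh-ratio restriction is needed.

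Now compute the difference:
\begin{align*}
2(\bar\partial_t^\nu V^n,V^n)-\bar\partial_t^\nu\|V^n\|^2
= A_0^{(n)}\|V^n\|^2-\sum_{k=1}^{n-1}\bigl(A_{n-k-1}^{(n)}-A_{n-k}^{(n)}\bigr)\bigl(2(V^k,V^n)-\|V^k\|^2\bigr)
-A_{n-1}^{(n)}\bigl(2(V^0,V^n)-\|V^0\|^2\bigr).
\end{align*}
Apply $2(V^k,V^n)-\|V^k\|^2\le\|V^n\|^2$ to each term. Since all the weights $A_{n-k-1}^{(n)}-A_{n-k}^{(n)}$ and $A_{n-1}^{(n)}$ are nonnegative, the subtracted terms are bounded by
\begin{align*}
\Bigl(\sum_{k=1}^{n-1}\bigl(A_{n-k-1}^{(n)}-A_{n-k}^{(n)}\bigr)+A_{n-1}^{(n)}\Bigr)\|V^n\|^2=A_0^{(n)}\|V^n\|^2,
\end{align*}
where the sum telescopes. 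The difference is therefore nonnegative, which gives the claim.

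The only real obstacle is justifying the monotonicity of the $A_j^{(n)}$ on a nonuniform mesh. I would argue it by the averaging argument above rather than by explicit formulas, since the explicit formulas depend on the step sizes.
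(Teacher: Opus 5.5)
Your proof is correct. It is essentially the standard argument behind the result the paper cites from \cite{LiaoH2018L1}; the paper itself gives no proof. That argument rests on positivity and monotonicity of the L1 kernels $A_j^{(n)}$, which your averaging argument justifies on any nonuniform mesh, together with your rearrangement, which in fact yields the exact identity $2(\bar\partial_t^\nu V^n,V^n)-\bar\partial_t^\nu\|V^n\|_{L^2}^2=\sum_{k=1}^{n-1}(A_{n-k-1}^{(n)}-A_{n-k}^{(n)})\|V^n-V^k\|_{L^2}^2+A_{n-1}^{(n)}\|V^n-V^0\|_{L^2}^2\ge 0$.
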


\begin{lemma}\label{grownwall}
(\cite{LyuP2022SFOR})Let $(g^n)_{n=1}^N$ and $(\lambda_l)_{l=0}^{N-1}$ be given nonnegative sequences. Assume that there exists a constant $\Lambda$ such that $\Lambda\geq\sum_{l=0}^{N-1}\lambda_l$, and that the maximum step satisfies 
$$\max_{1\leq n \leq N}\tau_n\leq \frac{1}{\sqrt[\nu]{\Gamma{(2-\nu)}\Lambda}}.$$
Then, for any nonnegative sequence $(v^k)_{k=0}^N$ and $(w^k)_{k=0}^N$ satisfying
$$\sum_{k=1}^nA_{n-k}^{(n)}\nabla_\tau\big[(v^k)^2+(w^k)^2\big]\leq\sum_{k=1}^n\lambda_{n-k}\big(v^k+w^k\big)^2+(v^n+w^n)g^n,~~1\leq n\leq N,$$
it holds that
$$v^n+w^n\leq 4E_\nu(4\Lambda t_n^\nu)\bigg(v^0+w^0+\max_{1\leq k\leq n}\sum_{j=1}^kP_{k-j}^{(k)}g^j\bigg),~~1\leq n\leq N,$$
where $E_\nu(z)=\sum_{k=0}^\infty\frac{z^k}{\Gamma{(1+k\nu)}}$ is the Mittag-Leffler function.
\end{lemma}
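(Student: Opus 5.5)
The plan is to reduce the quadratic inequality to a linear discrete fractional inequality for a single sequence, and then apply the standard discrete fractional Gr\"onwall machinery built on the complementary discrete convolution kernels $P_{n-j}^{(n)}$ of the L1 kernels $A_{n-k}^{(n)}$. Recall that these are defined by $\sum_{j=k}^n P_{n-j}^{(n)}A_{j-k}^{(j)}\equiv1$ for $1\le k\le n$. I will use the following properties of the L1 kernels on the (graded) mesh:
\begin{itemize}
\item positivity and monotonicity, $A_0^{(n)}\ge A_1^{(n)}\ge\cdots\ge A_{n-1}^{(n)}>0$;
\item the bound $A_0^{(n)}\ge \tau_n^{-\nu}/\Gamma(2-\nu)$;
\item the properties of $P$: $P_{n-j}^{(n)}\ge0$, $\sum_{j=1}^nP_{n-j}^{(n)}\le\omega_{1+\nu}(t_n)$, and more generally $\sum_{j=1}^nP_{n-j}^{(n)}\omega_{1+m\nu}(t_j)\le\omega_{1+(m+1)\nu}(t_n)$.
\end{itemize}
All of these are standard for the L1 formula (\ref{L1}).

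\textbf{Step 1 (reduction to one sequence).} Set $Y^k:=\sqrt{(v^k)^2+(w^k)^2}$, so that $\tfrac{1}{\sqrt2}(v^k+w^k)\le Y^k\le v^k+w^k\le\sqrt2\,Y^k$. The hypothesis then becomes
$$\sum_{k=1}^nA_{n-k}^{(n)}\nabla_\tau (Y^k)^2\le 2\sum_{k=1}^n\lambda_{n-k}(Y^k)^2+\sqrt2\,Y^ng^n .$$

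\textbf{Step 2 (linearization, the key step).} Fix $n$ and let $n_0\le n$ be an index with $Y^{n_0}=\max_{0\le k\le n}Y^k$; if $n_0=0$ there is nothing to prove. By Abel summation,
$$\sum_{k=1}^{n_0}A_{n_0-k}^{(n_0)}\nabla_\tau (Y^k)^2=A_0^{(n_0)}(Y^{n_0})^2-\sum_{k=1}^{n_0-1}\big(A_{n_0-k-1}^{(n_0)}-A_{n_0-k}^{(n_0)}\big)(Y^k)^2-A_{n_0-1}^{(n_0)}(Y^0)^2 .$$
Monotonicity of the kernel makes every subtracted coefficient nonnegative. Bounding $(Y^k)^2\le Y^{n_0}Y^k$ in each subtracted term therefore gives
$$\sum_{k}A_{n_0-k}^{(n_0)}\nabla_\tau (Y^k)^2\ge Y^{n_0}\sum_k A_{n_0-k}^{(n_0)}\nabla_\tau Y^k .$$
Dividing the inequality of Step 1 (taken at $n_0$) by $Y^{n_0}>0$ and using $(Y^k)^2/Y^{n_0}\le Y^k$ yields the linear inequality
$$\sum_{k=1}^{n_0}A_{n_0-k}^{(n_0)}\nabla_\tau Y^k\le 2\sum_{k=1}^{n_0}\lambda_{n_0-k}Y^k+\sqrt2\,g^{n_0}.$$
This inequality holds at every index that is a running maximum, which is exactly what the Gr\"onwall argument needs.

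\textbf{Step 3 (linear Gr\"onwall).} I will apply the standard linear discrete fractional Gr\"onwall lemma of Liao et al. First I move the $\lambda_0Y^{n_0}$ term to the left. Because $A_0^{(n)}\ge\tau_n^{-\nu}/\Gamma(2-\nu)$, the step restriction $\tau_n\le(\Gamma(2-\nu)\Lambda)^{-1/\nu}$ guarantees $2\lambda_0\le \tfrac12 A_0^{(n)}$, so this term is absorbed up to a factor $2$. Next I multiply by $P_{n-j}^{(n)}$ and sum, using the defining identity of $P$ to telescope the left-hand side into $Y^n-Y^0$. Finally I iterate the resulting Volterra-type bound, using $\sum_j P\,\omega_{1+m\nu}\le\omega_{1+(m+1)\nu}$ and $\sum_l\lambda_l\le\Lambda$. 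Summing the resulting series produces $E_\nu(4\Lambda t_n^\nu)$ times $\big(Y^0+\max_k\sum_{j\le k}P_{k-j}^{(k)}g^j\big)$, with a constant of order $2\sqrt2$. Returning to $v^n+w^n\le\sqrt2\,Y^n\le \sqrt2\,Y^{n_0}$ and $Y^0\le v^0+w^0$ then gives the constant $4$ claimed in the statement.

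I expect the main obstacle to be Step 3. There the bookkeeping must keep the Mittag-Leffler argument at $4\Lambda$ and the prefactor at $4$, and it must also handle the fact that the maximal index $n_0$ varies with $n$. I would handle the latter by proving the bound for the running maximum $\max_{k\le n}Y^k$ by induction on $n$.
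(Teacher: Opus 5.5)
The paper gives no proof of this lemma; it is quoted from \cite{LyuP2022SFOR}, so your argument has to stand on its own. Your Step 1 reduction and the Abel-summation inequality in Step 2 are correct. However, there are two genuine gaps.

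\textbf{The step-size restriction.} For the L1 kernels one has exactly $A_0^{(n)}=\tau_n^{-\nu}/\Gamma(2-\nu)$. So $\max_n\tau_n\le(\Gamma(2-\nu)\Lambda)^{-1/\nu}$ gives only $A_0^{(n)}\ge\Lambda\ge\lambda_0$. Your claim $2\lambda_0\le\frac12A_0^{(n)}$ needs $\max_n\tau_n\le(4\Gamma(2-\nu)\Lambda)^{-1/\nu}$.

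Better bookkeeping cannot repair this, because with the restriction as printed the statement is false. Take $N=1$, $\tau_1=T$, and $\lambda_0=\Lambda=T^{-\nu}/\Gamma(2-\nu)$, so the restriction holds and $A_0^{(1)}=\Lambda$. Take also $g^1=0$, $v^0=w^0=w^1=0$ and $v^1=1$. Then the hypothesis reads $\Lambda\le\Lambda$, while the conclusion reads $1\le 0$. (Taking $v^1=w^1=1$ and $A_0^{(1)}=\frac32\Lambda$ gives a counterexample with strict inequality in the restriction.)

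You should prove the lemma under $\max_n\tau_n\le 1/\sqrt[\nu]{4\Gamma(2-\nu)\Lambda}$. This is exactly what your doubling $\Lambda\mapsto2\Lambda$ in Step 1, combined with absorption at level $\frac12$, costs, and it produces the constants $4$ and $4\Lambda$. In the paper's own application $\lambda_l\equiv0$, so the misprint is harmless there.

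\textbf{Linearizing before convolving.} The linear inequality from Step 2 holds only at running-maximum indices. Telescoping with $P^{(n)}_{n-j}$, however, uses it at every $j\le n$, and ``induction on $\max_{k\le n}Y^k$'' does not supply the missing indices.

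Suppose you pass to $\widehat Y^n:=\max_{k\le n}Y^k$. At a non-record index you can control $\sum_kA^{(n)}_{n-k}\nabla_\tau\widehat Y^k$ only through the inequality at the previous record $m(n)$, so the forcing becomes $g^{m(n)}$. For example, let $g$ be supported at $j=1$ and let $Y^1$ be the overall maximum. You then get $g^1\sum_{j=1}^kP^{(k)}_{k-j}$, which is the L1 approximation of $\omega_{1+\nu}(t_k)g^1$. The statement's quantity, by contrast, is at most $\Gamma(2-\nu)\tau_1^\nu g^1$.

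The remedy is to reverse the order. The quadratic inequality holds at every index and $P^{(n)}_{n-j}\ge0$, so the identity $\sum_{j=k}^nP^{(n)}_{n-j}A^{(j)}_{j-k}=1$ gives, for every $n$,
\begin{equation*}
(Y^n)^2\le (Y^0)^2+\sum_{j=1}^nP^{(n)}_{n-j}\Big(2\sum_{k=1}^j\lambda_{j-k}(Y^k)^2+\sqrt2\,Y^jg^j\Big).
\end{equation*}
Only now apply the maximum trick. Let $Y^{n_0}=\max_{0\le k\le n}Y^k$ with $n_0\ge1$; otherwise $Y^n\le Y^0$ trivially. Dividing by $Y^{n_0}$ gives
\begin{equation*}
Y^n\le Y^{n_0}\le Y^0+2\sum_{j=1}^{n_0}P^{(n_0)}_{n_0-j}\sum_{k=1}^j\lambda_{j-k}Y^k+\sqrt2\max_{1\le k\le n}\sum_{j=1}^kP^{(k)}_{k-j}g^j .
\end{equation*}
On the right, $Y^{n_0}$ appears only with coefficient $2\lambda_0P^{(n_0)}_0=2\lambda_0\Gamma(2-\nu)\tau_{n_0}^\nu$. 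Under the corrected restriction this is at most $\frac12$, so it can be absorbed.

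After absorbing it, induct on $n$ with the ansatz
\[
Y^k\le 2E_\nu(4\Lambda t_k^\nu)\big(Y^0+\sqrt2\max_{i\le k}\textstyle\sum_{j=1}^iP^{(i)}_{i-j}g^j\big).
\]
The induction uses $\sum_{j}P^{(n)}_{n-j}E_\nu(4\Lambda t_j^\nu)\le\big(E_\nu(4\Lambda t_n^\nu)-1\big)/(4\Lambda)$, which follows from your $\omega_{1+m\nu}$ property applied termwise. Finally, $v^n+w^n\le\sqrt2\,Y^n$ gives the stated bound.
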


\begin{lemma}\label{P}
For the sequence $(P_{n-j}^{(n)})_{j=1}^n$, some properties are given in \cite{LiaoH2018L1}.
\begin{align*}
&\sum_{j=k}^nP_{n-j}^{(n)}A_{j-k}^{(j)}\equiv1,~~1\leq k\leq n,\\
&0\leq P_{n-j}^{(n)}\leq \Gamma{(2-\nu)}\tau_j^\nu,~~\sum_{j=1}^nP_{n-j}^{(n)}t_j^{-\alpha/2}\leq C,~~ 1\leq j\leq n\leq N.  
\end{align*}
\end{lemma}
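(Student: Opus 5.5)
The plan is to work directly from the definition of the complementary discrete convolution kernels of \cite{LiaoH2018L1}. For fixed $n$ they are defined recursively by
\begin{align*}
P_0^{(n)}:=\frac{1}{A_0^{(n)}},\qquad P_{n-j}^{(n)}:=\frac{1}{A_0^{(j)}}\sum_{k=j+1}^n\big(A_{k-j-1}^{(k)}-A_{k-j}^{(k)}\big)P_{n-k}^{(n)},\quad 1\le j\le n-1.
\end{align*}
I would prove the three properties in order: first the identity, then the two-sided bound on $P$, then the weighted sum bound.

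\textbf{The identity.} I will check the orthogonality identity $\sum_{j=k}^nP_{n-j}^{(n)}A_{j-k}^{(j)}\equiv1$ by downward induction on $k$. For $k=n$ it is the definition of $P_0^{(n)}$. Assume it holds for all indices larger than $k$. Subtracting the identities for $k$ and $k+1$ shows that the difference is exactly the defining recursion for $P_{n-k}^{(n)}$. Equivalently, the lower-triangular matrix $(P_{n-j}^{(n)})$ is the inverse of the L1 matrix written in the difference form $\sum_k A_{n-k}^{(n)}\nabla_\tau$.

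\textbf{The bound on $P$.} Nonnegativity needs the monotonicity $A_{k-j-1}^{(k)}>A_{k-j}^{(k)}$. This holds because $A_{k-i}^{(k)}$ is the mean value of the decreasing kernel $\omega_{1-\nu}(t_k-s)$ over $[t_{i-1},t_i]$, and these intervals move closer to $t_k$ as $i$ increases. With this, the recursion has nonnegative coefficients, and backward induction from $P_0^{(n)}>0$ gives $P_{n-j}^{(n)}\ge0$. For the upper bound, I take $k=j$ in the identity. All terms are nonnegative, so $P_{n-j}^{(n)}A_0^{(j)}\le1$. A direct computation gives $A_0^{(j)}=\tau_j^{-\nu}/\Gamma(2-\nu)$, hence $P_{n-j}^{(n)}\le\Gamma(2-\nu)\tau_j^\nu$.

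\textbf{The weighted sum.} This is the step I expect to be the real obstacle, and I would handle it with a test-sequence trick. Since $\alpha/2=\nu$, write $t_j^{-\alpha/2}=\Gamma(1-\nu)\,\omega_{1-\nu}(t_j)$. Take the grid function $g^0=0$ and $g^k=1$ for $k\ge1$. Its L1 derivative is
\begin{align*}
\sum_{k=1}^jA_{j-k}^{(j)}\nabla_\tau g^k=A_{j-1}^{(j)}=\frac{1}{\tau_1}\int_0^{t_1}\omega_{1-\nu}(t_j-s)\,ds\ge\omega_{1-\nu}(t_j),
\end{align*}
where the inequality holds because $\omega_{1-\nu}(t_j-s)$ is increasing in $s$. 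Multiply by $P_{n-j}^{(n)}\ge0$, sum over $j$, exchange the order of summation, and apply the identity. This gives
\begin{align*}
\sum_{j=1}^nP_{n-j}^{(n)}\omega_{1-\nu}(t_j)\le\sum_{k=1}^n\nabla_\tau g^k\sum_{j=k}^nP_{n-j}^{(n)}A_{j-k}^{(j)}=g^n-g^0=1.
\end{align*}
Therefore $\sum_{j}P_{n-j}^{(n)}t_j^{-\alpha/2}\le\Gamma(1-\nu)=:C$, uniformly in $n$ and in the (graded) mesh. The only delicate points are the monotonicity of $A$ and the choice of the jump test function; the remaining steps are bookkeeping.
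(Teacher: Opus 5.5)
Your proof is correct and complete. The paper gives no argument of its own; it only imports these properties from \cite{LiaoH2018L1}, and your derivation is essentially the standard one from that reference. That derivation runs: downward induction for the orthogonality identity, then monotonicity of the L1 weights for nonnegativity, then the identity at $k=j$ for $P_{n-j}^{(n)}\le 1/A_0^{(j)}=\Gamma(2-\nu)\tau_j^\nu$. For the weighted sum, your jump test function is just the identity at $k=1$ combined with $A_{j-1}^{(j)}\ge\omega_{1-\nu}(t_j)$, which gives the explicit constant $C=\Gamma(1-\nu)$.
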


\begin{theorem}\label{error-adj}
Let $\widetilde{U}_h^n$ and $\widetilde{U}_m^n$ be the solutions of (\ref{num-scheme-fed}) and (\ref{num-scheme-pod}). Then there holds 
\begin{align*} 
\frac{1}{N}\sum_{n=1}^N\|\widetilde{U}_h^n-\widetilde{U}_m^n\|_{L^2}^2\leq CN^{2-\alpha}\sum_{j=m+1}^r\lambda_j.    
\end{align*} 
\end{theorem}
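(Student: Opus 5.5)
We will argue by the standard projection splitting used for POD error analysis, adapted to the coupled first-order-in-$\nu$ system. Let $P_m$ denote the $H_0^1$ (i.e. $(\nabla\cdot,\nabla\cdot)$) orthogonal projection onto $X_h^m$, consistent with the choice $X=H_0^1(\Omega)$ in the POD construction. We split
$$e_m^n=(\widetilde U_h^n-P_m\widetilde U_h^n)+(P_m\widetilde U_h^n-\widetilde U_m^n)=:\rho^n+\theta^n,\qquad z_m^n=(\widetilde V_h^n-P_m\widetilde V_h^n)+(P_m\widetilde V_h^n-\widetilde V_m^n)=:\eta^n+\zeta^n .$$
Since $\widetilde U_h^n$, $\bar\partial_t^\nu\widetilde U_h^n$ and $\bar\partial_t^\nu\widetilde V_h^n$ are snapshots, the POD error lemma gives
$$\frac1N\sum_{n=1}^N\Big(\|\nabla\rho^n\|^2+\|\nabla(\bar\partial_t^\nu\widetilde U_h^n-P_m\bar\partial_t^\nu\widetilde U_h^n)\|^2+\|\nabla(\bar\partial_t^\nu\widetilde V_h^n-P_m\bar\partial_t^\nu\widetilde V_h^n)\|^2\Big)\le C\sum_{j=m+1}^r\lambda_j .$$
Because $\widetilde V_h^n=\bar\partial_t^\nu\widetilde U_h^n$ in the $H_0^1$ sense, the same bound holds for $\eta^n$. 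Poincaré's inequality then controls the $L^2$ parts, so it remains to bound $\theta^n$.

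To get the error equations for $(\theta^n,\zeta^n)$, we subtract (\ref{num-scheme-pod}) from (\ref{num-scheme-fed}) with $\varphi\in X_h^m$. The second equation is exact, so it gives $\zeta^n=\bar\partial_t^\nu\theta^n$, since $P_m$ commutes with the L1 operator and the $\eta$, $\rho$ contributions vanish by $H_0^1$-orthogonality. In the first equation, $(\nabla\rho^n,\nabla\varphi)=0$, which leaves
$$(\bar\partial_t^\nu\zeta^n,\varphi)+(\nabla\theta^n,\nabla\varphi)=-(\bar\partial_t^\nu\eta^n,\varphi).$$
The consistency term $\bar\partial_t^\nu\eta^n=(I-P_m)\bar\partial_t^\nu\widetilde V_h^n$ is exactly a projected snapshot error, which is why the fractional quotients were included among the snapshots. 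We test with $\varphi=\zeta^n$ and use $(\nabla\theta^n,\nabla\zeta^n)=(\nabla\theta^n,\bar\partial_t^\nu\nabla\theta^n)$. Lemma \ref{FD-ineq}, applied to both terms, then yields
$$\tfrac12\bar\partial_t^\nu\big(\|\zeta^n\|^2+\|\nabla\theta^n\|^2\big)\le \|\bar\partial_t^\nu\eta^n\|\,\big(\|\zeta^n\|+\|\nabla\theta^n\|\big).$$
This is exactly the hypothesis of Lemma \ref{grownwall} with $v^n=\|\zeta^n\|$, $w^n=\|\nabla\theta^n\|$, $\lambda_l=0$ and $g^n=2\|\bar\partial_t^\nu\eta^n\|$. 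Since $v^0=w^0=0$, it gives
$$\|\nabla\theta^n\|\le C\max_{k\le n}\sum_{j=1}^kP_{k-j}^{(k)}\|\bar\partial_t^\nu\eta^j\| .$$

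The main obstacle is converting this pointwise-in-$n$ convolution bound into the averaged quantity $\frac1N\sum_j\|\bar\partial_t^\nu\eta^j\|^2$, which is what the POD lemma controls. This conversion is where the factor $N^{2-\alpha}$ must come from. We plan to use Cauchy--Schwarz together with Lemma \ref{P}:
$$\Big(\sum_jP_{k-j}^{(k)}g^j\Big)^2\le \Big(\sum_j (P_{k-j}^{(k)})^2\Big)\sum_j (g^j)^2\le \Gamma(2-\nu)^2\max_j\tau_j^{2\nu}\cdot N\cdot\frac1N\sum_j(g^j)^2 .$$
Here we use $P_{k-j}^{(k)}\le\Gamma(2-\nu)\tau_j^\nu$ and $\sum_jP_{k-j}^{(k)}\le C$, so that $\sum_j (P_{k-j}^{(k)})^2\le C\max_j\tau_j^\nu$. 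On the graded mesh $\tau_j\le CN^{-1}$, so $\max_j\tau_j^{\nu}\lesssim N^{-\alpha/2}$. The crude bound therefore gives a factor of order $N\cdot N^{-\alpha/2}$, and a careful choice between the two estimates of $\sum_j(P_{k-j}^{(k)})^2$ should give at most $N^{2-\alpha}$. If needed, we will simply accept the slightly cruder power, noting that the $\max$ over $k$ is harmless since the bound is uniform in $k$.

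Finally, we average $\|\theta^n\|^2\le C\|\nabla\theta^n\|^2$ over $n$, combine it with the bound for $\rho^n$, and use $\|e_m^n\|^2\le2\|\rho^n\|^2+2\|\theta^n\|^2$. This gives $\frac1N\sum_n\|e_m^n\|^2\le CN^{2-\alpha}\sum_{j=m+1}^r\lambda_j$. The step-size restriction of Lemma \ref{grownwall} is void because $\Lambda=0$.
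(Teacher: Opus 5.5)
Your proposal is correct and follows the paper's proof step by step: the same Ritz (i.e.\ $H_0^1$-POD) projection splitting, the same error equations, the energy inequality from Lemma~\ref{FD-ineq}, Lemma~\ref{grownwall} with $\lambda_l=0$, and Cauchy--Schwarz with $P_{k-j}^{(k)}\le\Gamma(2-\nu)\tau_j^\nu$; you also correctly note that the paper's extra term $(\bar\partial_t^\nu\nabla\rho_u^n,\nabla\theta_u^n)$ vanishes by orthogonality, so $\zeta^n=\bar\partial_t^\nu\theta^n$. Your closing hedge is unnecessary. The displayed chain drops a factor $N$: the termwise bound gives $N^2\max_j\tau_j^{2\nu}\cdot\frac1N\sum_j(g^j)^2$, which is exactly the paper's $N^{2-\alpha}$. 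Your alternative $\sum_j(P_{k-j}^{(k)})^2\le C\max_j\tau_j^{\nu}$ is valid because $\sum_jP_{k-j}^{(k)}\le T^{\alpha/2}\sum_jP_{k-j}^{(k)}t_j^{-\alpha/2}\le C$ by Lemma~\ref{P}. It yields the uniform bound $CN^{1-\alpha/2}\sum_{j=m+1}^r\lambda_j$, and since $N^{1-\alpha/2}\le N^{2-\alpha}$ for $\alpha<2$, you in fact get a slightly sharper rate than the paper.
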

\begin{proof}
Introducing the Ritz projection operator $R_n^m:X_h\rightarrow X_h^m$, such that $(\nabla R_n^m \widetilde{U}_h^n, \nabla \varphi_m)=(\nabla \widetilde{U}_h^n, \nabla \varphi_m)$, $\forall \varphi_m\in X_h^m\subset X_h$.  Splitting $e_m^n=(\widetilde{U}_h^n-R_h^m\widetilde{U}_h^n)+(R_h^m\widetilde{U}_h^n-\widetilde{U}_m^n):=\rho_u^n+\theta_u^n$, $z_m^n=(\widetilde{V}_h^n-R_h^m\widetilde{V}_h^n)+(R_h^m\widetilde{V}_h^n-\widetilde{V}_m^n):=\rho_v^n+\theta_v^n$. 
First, we give an estimate for $\rho_u^n$.
\begin{align}\label{pod-1}
\frac{1}{N}\sum_{n=1}^N\|\rho_u^n\|_{L^2}^2\leq \frac{1}{N}\sum_{n=1}^N\|\rho_u^n\|_{H_0^1}^2\leq C\sum_{j=m+1}^r\lambda_j.    
\end{align}
Next, we derive an estimate for $\theta_u^n$.
\begin{align*}
&(\bar\partial_t^\nu\theta_v^n,\varphi_m)+(\nabla\theta_u^n,\nabla\varphi_m)\\
=&(\bar\partial_t^\nu R_h^m\widetilde{V}_h^n,\varphi_m)+(\nabla R_h^m\widetilde{U}_h^n,\nabla\varphi_m)-(\bar\partial_t^\nu \widetilde{V}_m^n,\varphi_m)-(\nabla \widetilde{U}_m^n,\nabla\varphi_m)\\
=&(\bar\partial_t^\nu R_h^m\widetilde{V}_h^n,\varphi_m)+(\nabla \widetilde{U}_h^n,\nabla\varphi_m)-(q(x)\omega_{2-\alpha}(t_n),\varphi_m)\\
=&(\bar\partial_t^\nu (R_h^m\widetilde{V}_h^n-\widetilde{V}_h^n),\varphi_m)\\
:=&-(\bar\partial_t^\nu \rho_v^n,\varphi_m).
\end{align*}
Substituting $\varphi_m=\theta_v^n$ into the above equation, it gives that
\begin{align}\label{pod-2}
&(\bar\partial_t^\nu\theta_v^n,\theta_v^n)+(\nabla\theta_u^n,\nabla\theta_v^n)=-(\bar\partial_t^\nu \rho_v^n,\theta_v^n).
\end{align}
Furthermore, one has
\begin{align*}
(\nabla \theta_v^n,\nabla\varphi_m)
&=(\nabla R_h^m\widetilde{V}_h^n,\nabla\varphi_m)-(\nabla \widetilde{V}_m^n,\nabla\varphi_m)\\
&:=(\nabla \widetilde{V}_h^n,\nabla\varphi_m)-(\bar\partial_t^\nu \nabla\widetilde{U}_m^n,\nabla\varphi_m)\\
&=(\bar\partial_t^\nu \nabla(\widetilde{U}_h^n-\widetilde{U}_m^n),\nabla\varphi_m)\\
&=(\bar\partial_t^\nu \nabla (\rho_u^n+\theta_u^n),\nabla\varphi_m).
\end{align*}
Substituting $\varphi_m=\theta_u^n$ into the above equation, we have
\begin{align}\label{pod-3}
(\nabla \theta_v^n,\nabla\theta_u^n) 
=(\bar\partial_t^\nu \nabla \rho_u^n,\nabla\theta_u^n)+(\bar\partial_t^\nu \nabla \theta_u^n,\nabla\theta_u^n).
\end{align}
For (\ref{pod-3}), from (\ref{pod-2}), we get
\begin{align}\label{pod-4}
(\bar\partial_t^\nu\theta_v^n,\theta_v^n)+(\bar\partial_t^\nu \nabla \theta_u^n,\nabla\theta_u^n)=-(\bar\partial_t^\nu \nabla \rho_u^n,\nabla\theta_u^n)-(\bar\partial_t^\nu \rho_v^n,\theta_v^n).
\end{align}
From Lemma \ref{FD-ineq}, for (\ref{pod-4}), we have
\begin{align*}
\frac12\bar\partial_t^\nu\|\theta_v^n\|_{L^2}^2+\frac12\bar\partial_t^\nu \|\nabla \theta_u^n\|_{L^2}^2
&\leq \|\bar\partial_t^\nu \nabla \rho_u^n\|_{L^2}\|\nabla\theta_u^n\|_{L^2}+\|\bar\partial_t^\nu \rho_v^n\|_{L^2}\|\theta_v^n\|_{L^2}\\
&\leq (\|\bar\partial_t^\nu \nabla \rho_u^n\|_{L^2}+\|\bar\partial_t^\nu \rho_v^n\|_{L^2})(\|\nabla\theta_u^n\|_{L^2}+\|\theta_v^n\|_{L^2}).
\end{align*}
Lemma \ref{grownwall} gives that 
\begin{align*}
\|\theta_u^n\|_{L^2}\leq C\big(\|\theta_v^n\|_{L^2}+ \|\nabla \theta_u^n\|_{L^2}\big) 
&\leq C\max_{1\leq k\leq n}\sum_{j=1}^kP_{k-j}^{(k)}(\|\bar\partial_t^\nu \nabla \rho_u^j\|_{L^2}+\|\bar\partial_t^\nu \rho_v^j\|_{L^2}),~1\leq n\leq N.
\end{align*}
Suppose $\max_{1\leq k\leq n}\sum_{j=1}^kP_{k-j}^{(k)}(\|\bar\partial_t^\nu \nabla \rho_u^j\|_{L^2}+\|\bar\partial_t^\nu \rho_v^j\|_{L^2})=\sum_{j=1}^{n_0}P_{n_0-j}^{({n_0})}(\|\bar\partial_t^\nu \nabla \rho_u^j\|_{L^2}+\|\bar\partial_t^\nu \rho_v^j\|_{L^2})$, $n_0\leq n$, from Lemma \ref{P}, one has
\begin{align*}
\|\theta_u^n\|_{L^2}^2
&\leq C\bigg(\sum_{j=1}^{n_0}P_{n_0-j}^{({n_0})}(\|\bar\partial_t^\nu \nabla \rho_u^j\|_{L^2}+\|\bar\partial_t^\nu \rho_v^j\|_{L^2})\bigg)^2\\
&\leq C\bigg(\sum_{j=1}^{n_0}\tau_j^{\alpha/2}(\|\bar\partial_t^\nu \nabla \rho_u^j\|_{L^2}+\|\bar\partial_t^\nu \rho_v^j\|_{L^2})\bigg)^2\\
&\leq C\bigg(\sum_{j=1}^{n}\tau_j^{\alpha/2}(\|\bar\partial_t^\nu \nabla \rho_u^j\|_{L^2}+\|\bar\partial_t^\nu \rho_v^j\|_{L^2})\bigg)^2\\
&\leq C\sum_{j=1}^{n}\tau_j^{\alpha}\sum_{j=1}^{n}\bigg(\|\bar\partial_t^\nu \nabla \rho_u^j\|_{L^2}+\|\bar\partial_t^\nu \rho_v^j\|_{L^2}\bigg)^2\\
&\leq CN\sum_{j=m+1}^r\lambda_j\sum_{j=1}^{n}\tau_j^{\alpha}\\
&\leq CnN^{1-\alpha}\sum_{j=m+1}^r\lambda_j.
\end{align*}
Then, we have
\begin{equation}\label{pod-5}
\frac{1}{N}\sum_{n=1}^N\|\theta_u^n\|_{L^2}^2
\leq  CN^{-\alpha}\sum_{j=m+1}^r\lambda_j\sum_{n=1}^Nn
\leq  CN^{2-\alpha}\sum_{j=m+1}^r\lambda_j.  
\end{equation}
Combining (\ref{pod-1}) and (\ref{pod-5}), the error between $e_m^n=\widetilde{U}_h^n-\widetilde{U}_m^n$ is given
\begin{align*}
\frac{1}{N}\sum_{n=1}^N\|\widetilde{U}_h^n-\widetilde{U}_m^n\|_{L^2}^2\leq  \frac{2}{N}\sum_{n=1}^N(\|\rho_u^n\|_{L^2}^2+\|\theta_u^n\|_{L^2}^2)\leq CN^{2-\alpha}\sum_{j=m+1}^r\lambda_j.    
\end{align*}
\end{proof}

\subsection{POD approach for the forward problem}
Recall Remark \ref{re-eq}, we consider the solution of the forward system approximated by a finite-dimensional truncation. For $U_h^n \approx\sum_{k=1}^L(U_h^n,\phi_k)\phi_k$, there exists a positive constant $\varepsilon$ such that $\|U_h^n-\sum_{k=1}^L(U_h^n,\phi_k)\phi_k\|_{L^2}<\epsilon$, $1\le n \le N$. Under the assumption that $\varepsilon$ is small enough, the error estimate between the solution in the finite element space (\ref{num-scheme-fed-for}) and its in the POD space (\ref{num-scheme-pod-for}) is derived following the idea of Theorem \ref{error-adj}. 
A fully discrete scheme is as follows: find $U_h^n$, $V_h^n\in X_h$ for $n=1$, 2, $\cdots$, $N$
\begin{equation}\label{num-scheme-fed-for}
\begin{cases}
  (\bar\partial_t^\nu V_h^n,\varphi) + (\nabla U_h^n,\nabla \varphi) = (a_1(x)\omega_{2-\alpha}(t_n),\varphi), & \forall \varphi\in X_h, \\
  (\nabla V_h^n,\nabla\varphi)  = (\bar\partial_t^\nu \nabla U_h^n,\nabla\varphi),
\end{cases}
\end{equation}
where $X_h$ is the associated continuous piecewise linear finite element space. 
The corresponding numerical scheme is obtained using the POD space $X_h^m$, $m\le r$, spanned by the first $m$ POD basis functions. Find $U_m^n\in X_h^m$, $n=1,2$, $\cdots$, $N$ such that
\begin{equation}\label{num-scheme-pod-for}
\begin{cases}
  (\bar\partial_t^\nu V_m^n,\varphi) + (\nabla U_m^n,\nabla \varphi) = (a_1(x)\omega_{2-\alpha}(t_n),\varphi), & \forall \varphi\in X_h^m, \\
  (\nabla V_m^n,\nabla\varphi)  = (\bar\partial_t^\nu \nabla U_m^n,\nabla\varphi).
\end{cases}
\end{equation}

\begin{remark}
The POD space $X_h^m$ in (\ref{num-scheme-pod-for}) comes from (\ref{num-scheme-fed}).  Specifically, fully discrete solutions $\{\widetilde{U}_h^n\}_{n=1}^N$ and fractional difference quotients $\{\bar\partial_t^\nu \widetilde{U}_h^n\}_{n=1}^N$, $\{\bar\partial_t^\nu \widetilde{V}_h^n\}_{n=1}^N$ are taken as snapshots to generate an optimal orthonormal basis $\{\psi_j\}_{j=1}^r$.
\end{remark}

\begin{theorem}\label{main-th1}
Let $U_h^n$ and $U_m^n$ be the solutions of (\ref{num-scheme-fed-for}) and (\ref{num-scheme-pod-for}). Then there holds 
\begin{align*} 
\frac{1}{N}\sum_{n=1}^N\|U_h^n-U_m^n\|_{L^2}^2\leq CL^{4/d+2}N^{2-\alpha}\sum_{j=m+1}^r\lambda_j.    
\end{align*}    
\end{theorem}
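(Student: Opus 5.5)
The plan is to repeat the argument of Theorem \ref{error-adj} for (\ref{num-scheme-fed-for})--(\ref{num-scheme-pod-for}). The snapshots used to build $X_h^m$, however, come from the observation system (\ref{num-scheme-fed}), not from the forward problem. So the one genuinely new ingredient is an estimate of the projection errors $\rho_u^n=U_h^n-R_h^mU_h^n$, $\rho_v^n=V_h^n-R_h^mV_h^n$ and of their fractional difference quotients in terms of the POD tail $\sum_{j=m+1}^r\lambda_j$ of the \emph{observation} snapshots. I would transfer that tail through the one-to-one mapping (\ref{matrix-form-adjoint-1}), restricted to the first $L$ modes as in Remark \ref{re-eq}.

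\textbf{Step 1 (transfer of the snapshot relation).} Replace $U_h^n$ by its truncation $U_h^{n,L}=\sum_{k=1}^L(U_h^n,\phi_k)\phi_k$, with truncation error below $\varepsilon$, which is assumed negligible. Modewise, (\ref{matrix-form-adjoint-1}) gives $(U_h^n)_k=c_k(\widetilde U_h^n)_k$ with $c_k=(a_1)_k/q_k$, and the same holds for $V$ and for the quotients $\bar\partial_t^\nu$. Lemma \ref{bounds} gives $|c_k|\le Ck^{2/d}\le CL^{2/d}$ for $k\le L$. Hence $U_h^{n,L}=\mathcal{C}_L\widetilde U_h^{n}$, where $\mathcal{C}_L:=\sum_{k\le L}c_k(\cdot,\phi_k)\phi_k$ is the finite-rank multiplier. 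This operator satisfies $\|\mathcal{C}_Lw\|_{H_0^1}\le CL^{2/d}\|w\|_{H_0^1}$, since it is diagonal in the eigenbasis.

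\textbf{Step 2 (projection error).} The key point is that $\mathcal{C}_L$ does not commute with the Ritz projection onto $X_h^m$. I would therefore not argue through $R_h^m$ directly. Instead, I would use that $\mathcal{C}_L\widetilde U_h^n$ lies in the image of the snapshot span, and bound
\[
\|\rho_u^n\|_{H_0^1}\le\|U_h^{n,L}-\mathcal{C}_L P_m\widetilde U_h^n\|_{H_0^1}+\operatorname{dist}_{H_0^1}(\mathcal{C}_LP_m\widetilde U_h^n,X_h^m),
\]
where $P_m$ denotes the POD projection.

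The first term is at most $CL^{2/d}\|\widetilde U_h^n-P_m\widetilde U_h^n\|_{H_0^1}$. The second term must be controlled using the fact that $\mathcal{C}_L$ acts on an $L$-dimensional space. Writing $\mathcal{C}_L$ in the $\phi_k$ basis and using a Cauchy--Schwarz sum over the $L$ modes, I expect an extra factor of order $L$ in the squared norm. Combined with $L^{4/d}$ from $c_k^2$, this gives
\[
\frac1N\sum_n\|\rho_u^n\|_{H_0^1}^2\le CL^{4/d+2}\sum_{j>m}\lambda_j.
\]
The same bound holds for $\rho_v^n$, $\bar\partial_t^\nu\rho_u^n$ and $\bar\partial_t^\nu\rho_v^n$, because these quotients were included among the snapshots. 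This step is the main obstacle. I would first try the crude modewise bound; if the naive commutator argument fails, I would instead define the comparison element directly as $\mathcal{C}_L$ applied to the POD reconstruction, and accept the $L^2$ loss from summing over modes.

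\textbf{Step 3 (energy argument).} With the splitting $U_h^n-U_m^n=\rho_u^n+\theta_u^n$, $V_h^n-V_m^n=\rho_v^n+\theta_v^n$, the error equations (\ref{pod-2})--(\ref{pod-4}) hold verbatim, since they use only Galerkin orthogonality. Lemma \ref{FD-ineq} and Lemma \ref{grownwall} then bound $\|\theta_u^n\|$ by $\sum_jP^{(k)}_{k-j}\bigl(\|\bar\partial_t^\nu\nabla\rho_u^j\|+\|\bar\partial_t^\nu\rho_v^j\|\bigr)$. Lemma \ref{P} and Cauchy--Schwarz, applied exactly as in the proof of Theorem \ref{error-adj}, give
\[
\|\theta_u^n\|^2\le CnN^{1-\alpha}L^{4/d+2}\sum_{j>m}\lambda_j.
\]
Averaging over $n$ gives the factor $N^{2-\alpha}$, and adding the $\rho_u$ bound from Step 2 yields the stated estimate.
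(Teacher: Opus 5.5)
Your Steps 1 and 3 agree with the paper's proof: the Ritz splitting, the transfer through $c_k=(a_1)_k/q_k$ with $|c_k|\le CL^{2/d}$ from Lemma \ref{bounds}, and the energy/Gronwall argument for $\theta_u^n$ copied from Theorem \ref{error-adj}. Step 2, which you correctly single out as the only new ingredient, is not closed.

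The first term of your splitting is fine: $\|\mathcal C_L(I-P_m)\widetilde U_h^n\|_{H_0^1}\le CL^{2/d}\|(I-P_m)\widetilde U_h^n\|_{H_0^1}$, which after averaging costs only $L^{4/d}$. The second term, $\operatorname{dist}_{H_0^1}(\mathcal C_LP_m\widetilde U_h^n,X_h^m)$, has no reason to be bounded by $\sum_{j>m}\lambda_j$. It is small only if $X_h^m$ is nearly invariant under the multiplier $\mathcal C_L$, and no Cauchy--Schwarz over modes produces such invariance. The case $m=r$ makes this plain: the tail vanishes, but $\operatorname{dist}(\mathcal C_L\widetilde U_h^n,X_h^r)$ is zero only if $\mathcal C_L$ happens to map the snapshots back into their own span.

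Your fallback does not help either. $\|\rho_u^n\|_{H_0^1}$ is a best-approximation error over $X_h^m$, so any comparison element must lie in $X_h^m$, and $\mathcal C_L$ applied to the POD reconstruction does not. The exponent bookkeeping is also reverse-engineered: a factor $L$ times $L^{4/d}$ is $L^{4/d+1}$, not $L^{4/d+2}$.

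The paper sidesteps the commutation issue by working modewise. Since $I-R_h^m$ is linear, $(I-R_h^m)\sum_{k\le L}c_k(\widetilde U_h^n,\phi_k)\phi_k=\sum_{k\le L}c_k(I-R_h^m)\big[(\widetilde U_h^n,\phi_k)\phi_k\big]$, so the $c_k$ act as scalars. The triangle inequality and Cauchy--Schwarz then give $\max_k|c_k|^2L\le CL^{4/d+1}$. Each of the $L$ modal terms $\frac1N\sum_n\|(I-R_h^m)(\widetilde U_h^n,\phi_k)\phi_k\|_{L^2}^2$ is then bounded by $C\sum_{j>m}\lambda_j$, which supplies the last factor $L$.

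Be aware that this per-mode bound is asserted, not derived. The POD identity controls only the full snapshots, not their modal components. The bound also has the same defect as your second term: for $m=r$ it forces $\phi_k\in X_h^r$ for every $k\le L$ with $q_k\neq0$. A concrete counterexample, in the modal setting of (\ref{matrix-form-adjoint-1}), uses a single time step, $N=1$:
\begin{enumerate}
\item The snapshots $\widetilde U_h^1$, $\bar\partial_t^\nu\widetilde U_h^1=A_0^{(1)}\widetilde U_h^1$ and $\bar\partial_t^\nu\widetilde V_h^1=(A_0^{(1)})^2\widetilde U_h^1$ are collinear, so $m=r=1$ and the right-hand side of the theorem vanishes.
\item Take $a_1=\phi_1+\phi_2$ and noise-free data $q=U_h^1$. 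Then $c_k=\big((A_0^{(1)})^2+\mu_k\big)/\omega_{2-\alpha}(t_1)$, so $c_1\neq c_2$.
\item Hence $U_h^1\notin X_h^1$ and $U_m^1\neq U_h^1$, so the left-hand side is positive.
\end{enumerate}

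So neither your route nor the paper's can close this step for general $L$. What is missing is a hypothesis making the snapshot span contain $\mathrm{span}\{\phi_k:k\le L,\ q_k\neq0\}$ quantitatively. One such hypothesis is $L$ not exceeding the number of snapshots, together with a non-degeneracy condition on the modal time profiles. This is essentially the condition of Lemma 2.2 in \cite{ZhangZhang2024pod} that the paper claims to relax, and the constant would then depend on that non-degeneracy.
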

\begin{proof} Repeating the process in Theorem \ref{error-adj}, we have
\begin{align}\label{th2-1}
\frac{1}{N}\sum_{n=1}^N\|U_h^n-U_m^n\|_{L^2}^2\leq  \frac{C}{N}\sum_{n=1}^N \|U_h^n-R_h^mU_h^n\|_{L^2}^2+\frac{C}{N}\sum_{n=1}^N \|R_h^mU_h^n-U_m^n\|_{L^2}^2.
\end{align}
Let $I_1:=\frac{C}{N}\sum_{n=1}^N \|U_h^n-R_h^mU_h^n\|_{L^2}^2$ and $I_2:=\frac{C}{N}\sum_{n=1}^N \|R_h^mU_h^n-U_m^n\|_{L^2}^2$.
For the term $I_1$, from Remark \ref{bound-noise}, one has
\begin{align}
\frac{C}{N}\sum_{n=1}^N \|U_h^n-R_h^mU_h^n\|_{L^2}^2\nonumber
&\leq \frac{C}{N}\sum_{n=1}^N \bigg\|(I-R_h^m)\sum_{k=1}^L(U_h^n,\phi_k)\phi_k\bigg\|_{L^2}^2\\ \nonumber
&\leq \frac{C}{N}\sum_{n=1}^N \bigg\|(I-R_h^m)\sum_{k=1}^L\frac{(a_1)_k}{q_k}(\widetilde{U}_h^n,\phi_k)\phi_k\bigg\|_{L^2}^2\\ \nonumber
&\leq \frac{C}{N}\sum_{n=1}^N \max_k\bigg|\frac{(a_1)_k}{q_k}\bigg|^2L\sum_{k=1}^L\bigg\|(I-R_h^m)(\widetilde{U}_h^n,\phi_k)\phi_k\bigg\|_{L^2}^2\\ \nonumber
&\leq \frac{C}{N}L^{4/d+1}\sum_{n=1}^N \sum_{k=1}^L\bigg\|(I-R_h^m)(\widetilde{U}_h^n,\phi_k)\phi_k\bigg\|_{L^2}^2\\ \nonumber
&\leq CL^{4/d+1} \sum_{k=1}^L\frac{1}{N}\sum_{n=1}^N\bigg\|(I-R_h^m)(\widetilde{U}_h^n,\phi_k)\phi_k\bigg\|_{L^2}^2\\ \label{pod-6}
&\leq CL^{4/d+2}\sum_{j=m+1}^r\lambda_j.
\end{align}

Similar to (\ref{pod-2})-(\ref{pod-5}) and (\ref{pod-6}), for $I_2$, we have
\begin{align*}
\|R_h^mU_h^n-U_m^n\|_{L^2}^2
&\leq C\bigg(\sum_{j=1}^{n}\tau_j^{\alpha/2}\big(\|\bar\partial_t^\nu \nabla (U_h^j-R_h^mU_h^j)\|_{L^2}+\|\bar\partial_t^\nu (V_h^j-R_h^mV_h^j)\|_{L^2}\big)\bigg)^2\\
&\leq C\sum_{j=1}^{n}\tau_j^{\alpha}\sum_{j=1}^{n}\bigg(\|\bar\partial_t^\nu \nabla (U_h^j-R_h^mU_h^j)\|_{L^2}+\|\bar\partial_t^\nu (V_h^j-R_h^mV_h^j)\|_{L^2}\bigg)^2\\
&\leq CL^{4/d+2}N\sum_{j=m+1}^r\lambda_j\sum_{j=1}^{n}\tau_j^{\alpha}\\
&\leq CL^{4/d+2}nN^{1-\alpha}\sum_{j=m+1}^r\lambda_j,
\end{align*}
then it gives
\begin{align*}
\frac{1}{N}\sum_{n=1}^N\|R_h^mU_h^n-U_m^n\|_{L^2}^2\leq  CL^{4/d+2}N^{2-\alpha}\sum_{j=m+1}^r\lambda_j.
\end{align*}
\end{proof}

\begin{remark}
Theorem \ref{main-th1} illustrates that we take the snapshots from the observation system (\ref{eq-adjoint}) based on the known final time measurement $q(x)$ to construct a POD model reduction space that still is a good low-rank approximation for the solutions of the forward system.
\end{remark}

\section{Numerical algorithm for the backward problem}\label{sec-num}
In this section, we propose an algorithm for quickly reconstructing initial values based on data-driven ideas. The core lies in training a low-rank approximation space(POD space) for the forward system,  based on observational data. Reconstructing the initial values in the POD space can improve computational efficiency.
\\
\textbf{ First step.} Perform quasi-uniform sampling on the observations of $q(x)$ over the entire region to obtain discrete observational data $\{q(x_i)\}_{i=1}^m$.\\ 
\textbf{ Second step.} Perform data mollification, we obtain regularized data $q^r$(For details, refer to section \ref{data-moli}). \\
\textbf{ Third step.} Solve the observation system with $q^r$ (\ref{num-scheme-fed}) to obtain snapshots, and construct a POD space $X_h^m$.\\
\textbf{ Fourth step.} Solve the following minimization problem (\ref{mini-pro}):
\begin{align}\label{mini-pro}
a_{pod}=\arg \min\limits_{a_1\in X_h^m}\frac{1}{m}\sum_{i=1}^m|(Sa_1)(x_i)-q(x_i)|^2+\lambda\|a_1\|_{H^1}^2,
\end{align}
where $S$ is the forward operator $Sa_1\rightarrow u(T)$.

Unlike \cite{LiZhang2025,LiZhang2024}, we recover the initial function in a low-dimensional intrinsic space based on the model but not in the finite element space. The choice of $\lambda$, we refer to \cite{LiZhang2025}.

\begin{remark}
In previous sections, we give the estimates for POD approaches. Specifically, we take fully discrete solutions $\{\widetilde{U}_h^n\}_{n=1}^N$ and fractional difference quotients $\{\bar\partial_t^\nu \widetilde{U}_h^n\}_{n=1}^N$, $\{\bar\partial_t^\nu \widetilde{V}_h^n\}_{n=1}^N$ as snapshots to generate an optimal orthonormal basis $\{\psi_j\}_{j=1}^r$.    
In numerical experiments, we find that the POD space is good enough when only taking $\{\widetilde{U}_h^n\}_{n=1}^N$ as snapshots in space $X\in L^2(\Omega)$.
\end{remark}

\begin{example}\label{ex1}We consider the following equation with $a_1(x)=\sin(x)$:
\begin{equation}\label{ex1_eq}
\begin{cases}
  \partial_t^\nu v - \Delta u = a_1(x)\omega_{2-\alpha}(t), & (x,t) \in (0,\pi) \times (0,0.1), \\
  v  = \partial_t^\nu u, & (x,t) \in (0,\pi) \times (0,0.1), \\
  u(x,0) = v(x,0)=0, & x \in (0,\pi), \\
  u(x,t) = v(x,t)=0, & (x,t) \in \{0,\pi\}  \times (0,0.1).
\end{cases}
\end{equation}    
In this example, we first solve the problem (\ref{ex1_eq}) using the FEM. Then, we compare the numerical result with the use of the proper orthogonal decomposition method to solve (\ref{ex1_eq}).
\end{example}
The time grid $t_n=0.1(\frac{n}{N})^r$, $N=400$, $r=\frac{2-\nu}{1-\nu}$,  $\nu=\frac{3}{4}$, the space grid $x_i=ih$, $h=\frac{\pi}{200}$.
We take five POD basis functions in the calculation. The maximum absolute error of FEM and POD is around $2\times 10^{-16}$.
The time costs are $0.379$ seconds and $0.026$ seconds, respectively. It implies that the POD method can solve a forward problem in only one-tenth of the time required by the FEM while maintaining almost the same accuracy.

\begin{figure}[!htb]
        \begin{minipage}{0.48\linewidth}% 表示图片的占用那一列的宽度
		\centering
		%\vspace{-0.6cm}%表示图片与最上方的文字的距离
		\setlength{\abovecaptionskip}{0.28cm}% 表示caption 与图片之间的距离
		\includegraphics[width=1\linewidth]{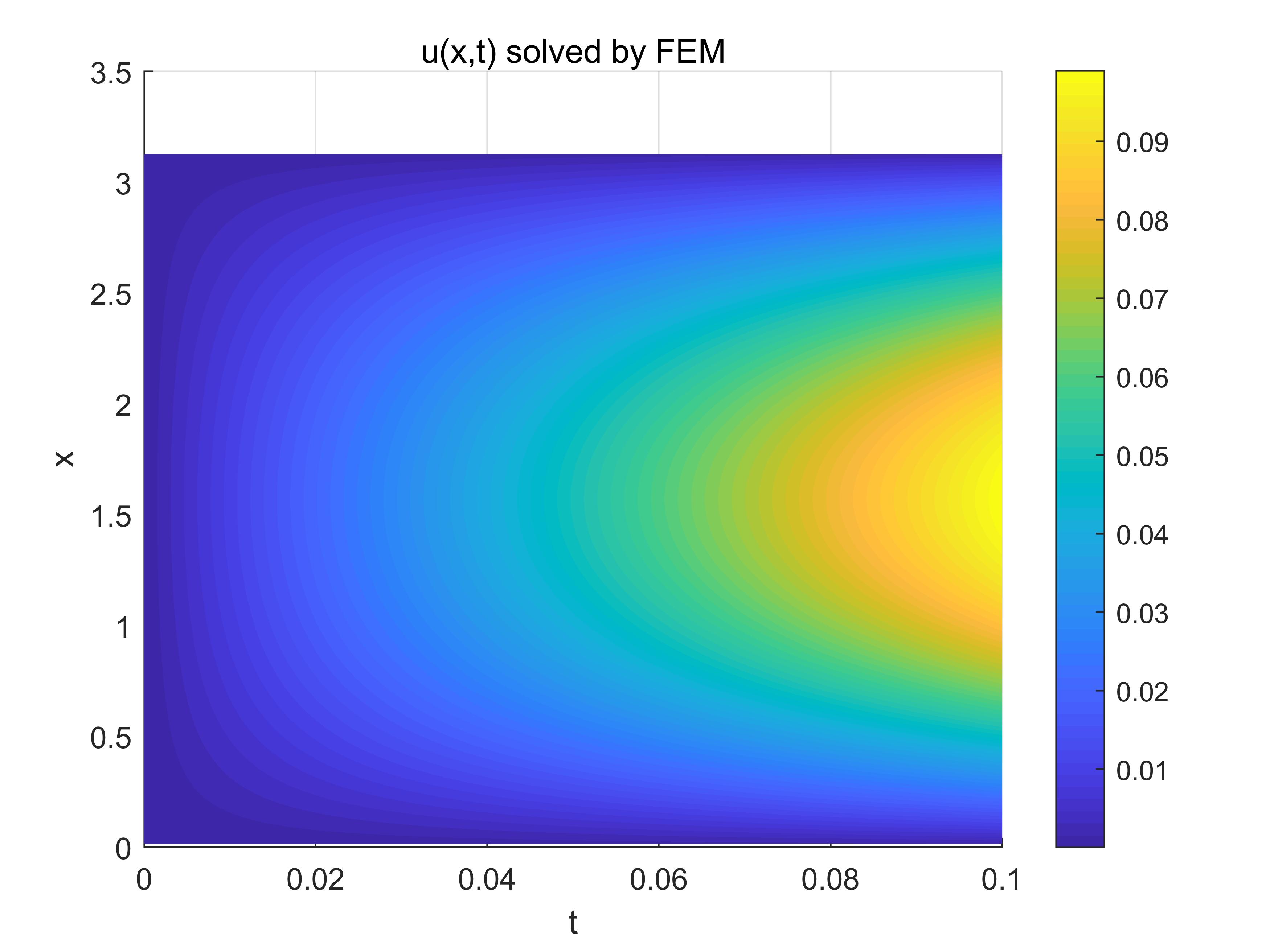}
		\caption{$u(x,t)$ (FEM).}
	\end{minipage}
        \begin{minipage}{0.48\linewidth}% 表示图片的占用那一列的宽度
		\centering
		%\vspace{-0.6cm}%表示图片与最上方的文字的距离
		\setlength{\abovecaptionskip}{0.28cm}% 表示caption 与图片之间的距离
		\includegraphics[width=1\linewidth]{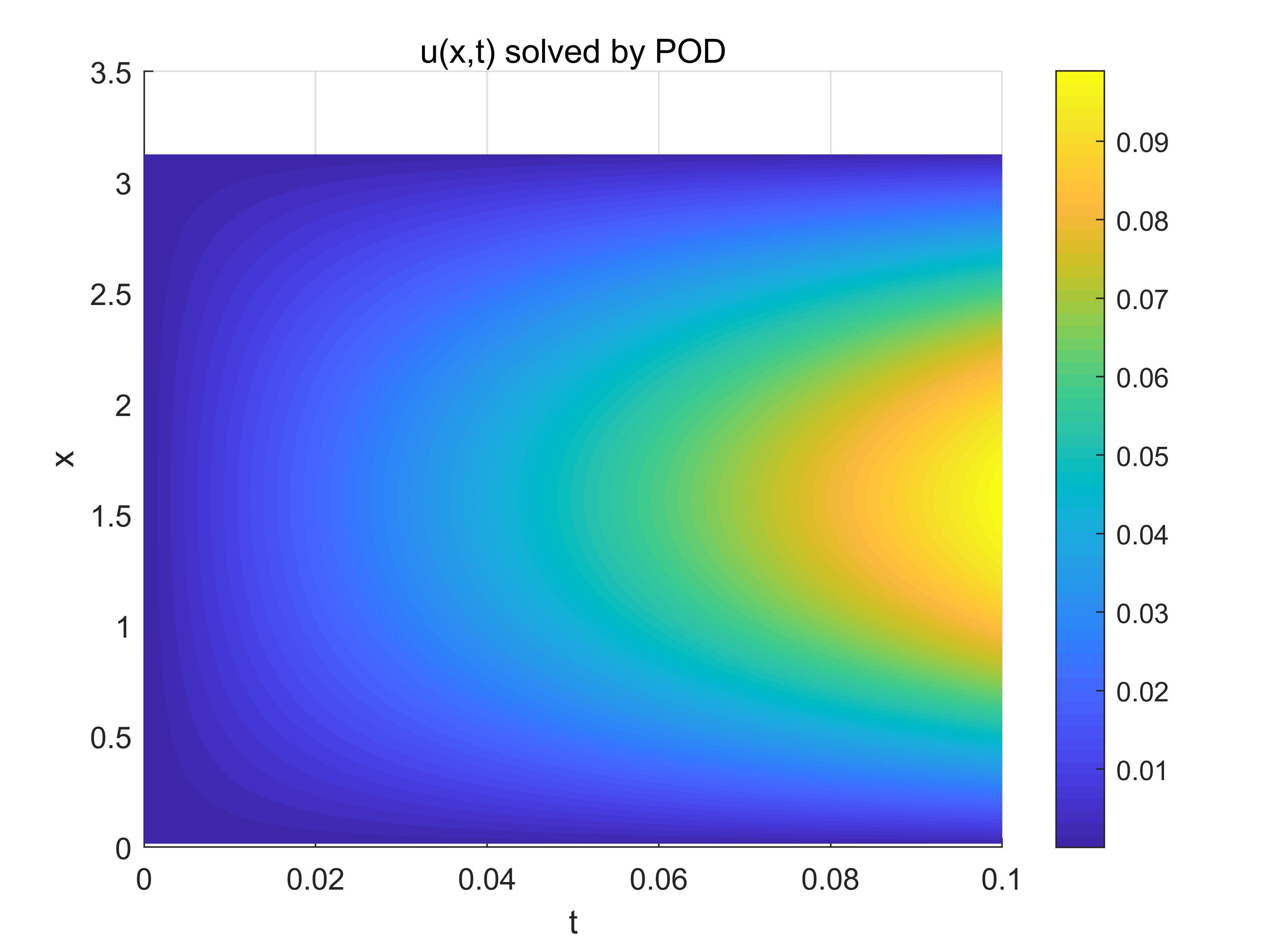}
		\caption{$u(x,t)$ (POD).}
	  \end{minipage}    
\end{figure}

\begin{figure}[!htb]       
		\centering
		\setlength{\abovecaptionskip}{0.28cm}% 表示caption 与图片之间的距离
		\includegraphics[width=0.48\linewidth]{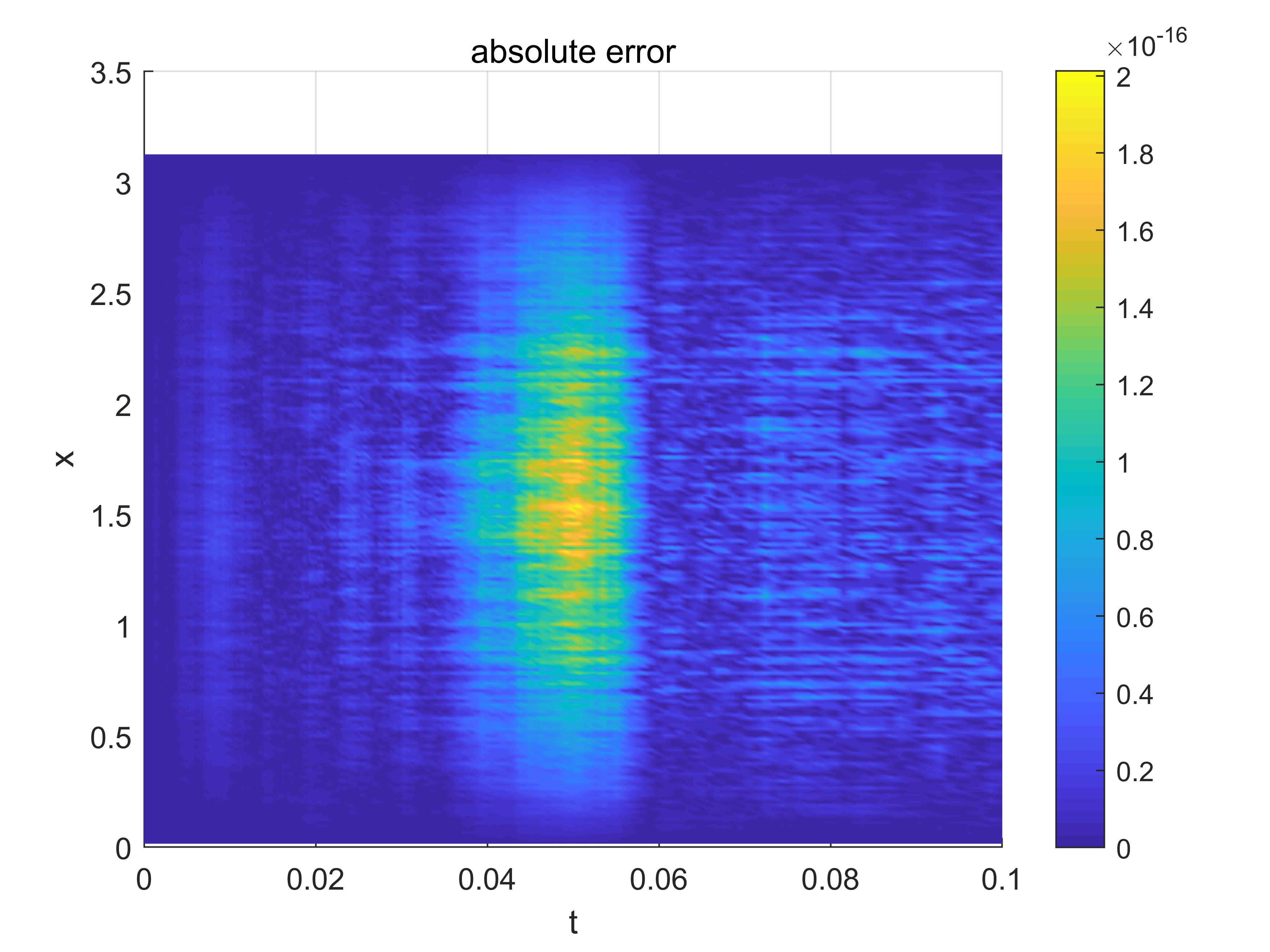}
		\caption{Absolute error.} 
\end{figure}

\begin{example}\label{ex2}
Let $q(x)=u(x,0.1)$, where $u(x,0.1)$ is the terminal solution of (\ref{ex1_eq}) with $\nu=\frac34$.
We solve the following equation by FEM to get snapshots of the POD space:
\begin{equation*}
\begin{cases}
  \partial_t^\nu v - \Delta u = q(x)\omega_{2-\alpha}(t), & (x,t) \in (0,\pi) \times (0,0.1), \\
  v  = \partial_t^\nu u, & (x,t) \in (0,\pi) \times (0,0.1), \\
  u(x,0) = v(x,0)=0, & x \in (0,\pi), \\
  u(x,t) = v(x,t)=0, & (x,t) \in \{0,\pi\}  \times (0,0.1).
\end{cases}
\end{equation*}  
Then, we intend to find $a_1(x)$ in Example \ref{ex1} based on the terminal observation $q(x)$. The grid settings are the same as for Example \ref{ex1}. We also take five POD basis functions in the calculation.
\end{example}

The time costs are $10.08$ seconds for FEM \cite{LiZhang2025} and $0.55$ seconds for POD. Since it is almost a noise-free case, the regularization parameter $\lambda=0$ in (\ref{mini-pro}). Both methods obtain the satisfied initial function $a_1(x)=\sin(x)$.

Then, we consider the case of noise data for Example \ref{ex2}. The noise level $\epsilon=\frac{\sigma}{\|u(T)\|_\infty}\approx \frac{0.015}{0.1}=15\%$. The regularization parameter $\lambda=5.43\times10^{-6}$. The time costs are $90.52$ seconds for FEM \cite{LiZhang2025} and $1.41$ seconds for POD.

\begin{figure}[!htb]
        \begin{minipage}{0.48\linewidth}% 表示图片的占用那一列的宽度
		\centering
		%\vspace{-0.6cm}%表示图片与最上方的文字的距离
		\setlength{\abovecaptionskip}{0.28cm}% 表示caption 与图片之间的距离
		\includegraphics[width=1\linewidth]{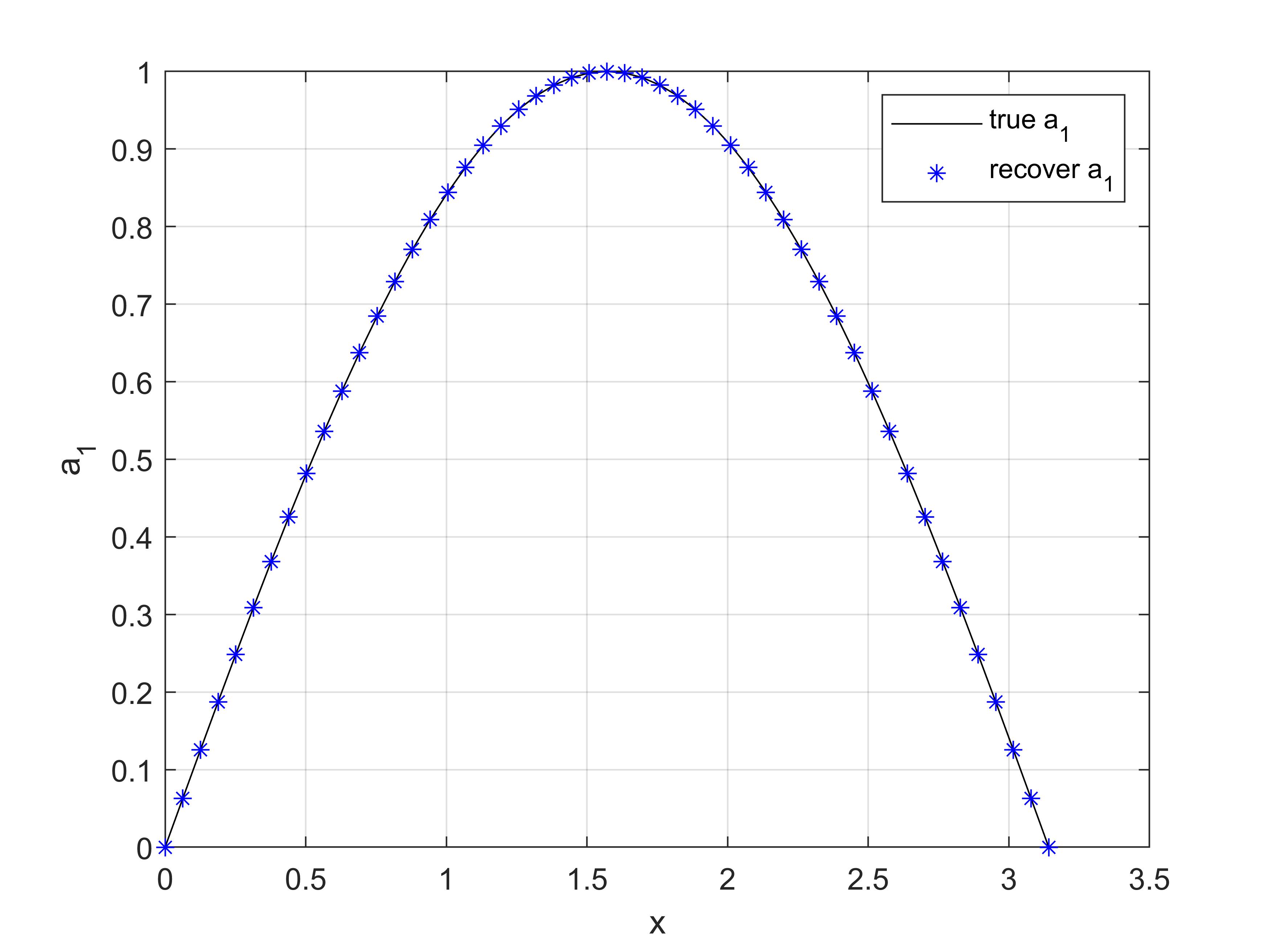}
		\caption{Recover $a_1(x)$ \cite{LiZhang2025}.}
	\end{minipage}
        \begin{minipage}{0.48\linewidth}% 表示图片的占用那一列的宽度
		\centering
		%\vspace{-0.6cm}%表示图片与最上方的文字的距离
		\setlength{\abovecaptionskip}{0.28cm}% 表示caption 与图片之间的距离
		\includegraphics[width=1\linewidth]{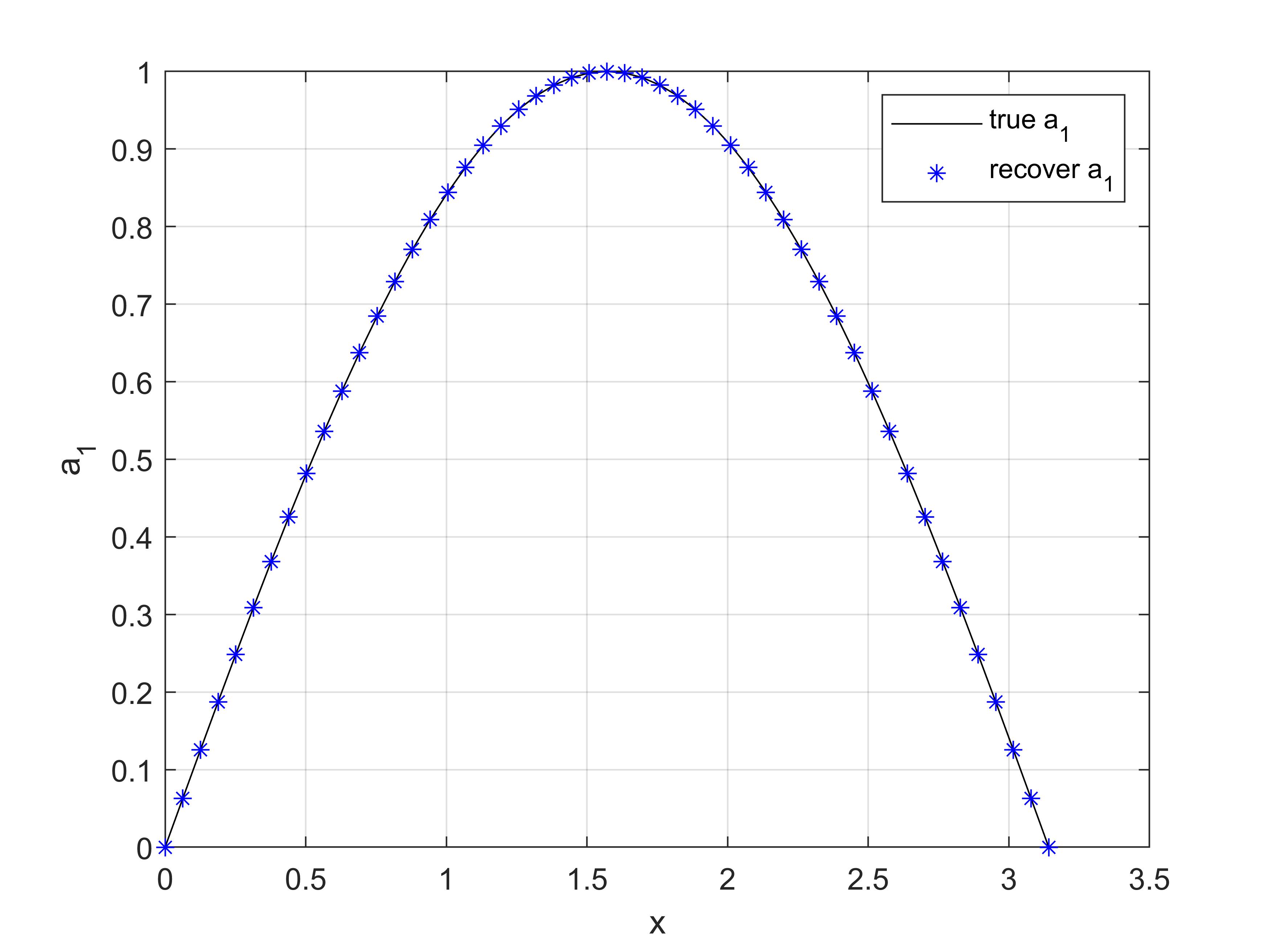}
		\caption{Recover $a_1(x)$ (POD).}
	  \end{minipage}    
      \caption{The case of noise-free data (Ex. \ref{ex2}).}
\end{figure}

\begin{figure}[!htb]
        \begin{minipage}{0.48\linewidth}% 表示图片的占用那一列的宽度
		\centering
		%\vspace{-0.6cm}%表示图片与最上方的文字的距离
		\setlength{\abovecaptionskip}{0.28cm}% 表示caption 与图片之间的距离
		\includegraphics[width=1\linewidth]{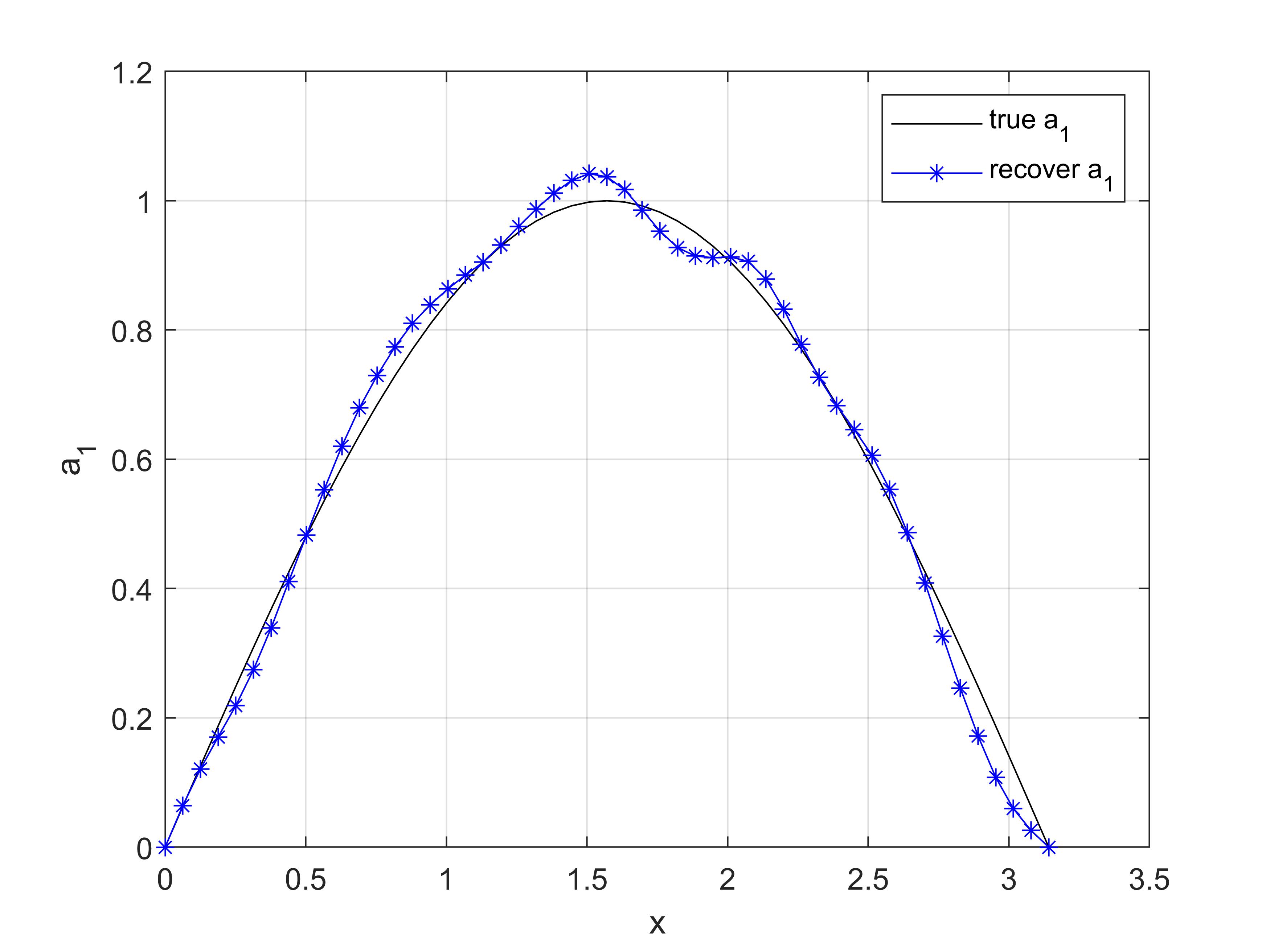}
		\caption{Recover $a_1(x)$ \cite{LiZhang2025}.}
	\end{minipage}
        \begin{minipage}{0.48\linewidth}% 表示图片的占用那一列的宽度
		\centering
		%\vspace{-0.6cm}%表示图片与最上方的文字的距离
		\setlength{\abovecaptionskip}{0.28cm}% 表示caption 与图片之间的距离
		\includegraphics[width=1\linewidth]{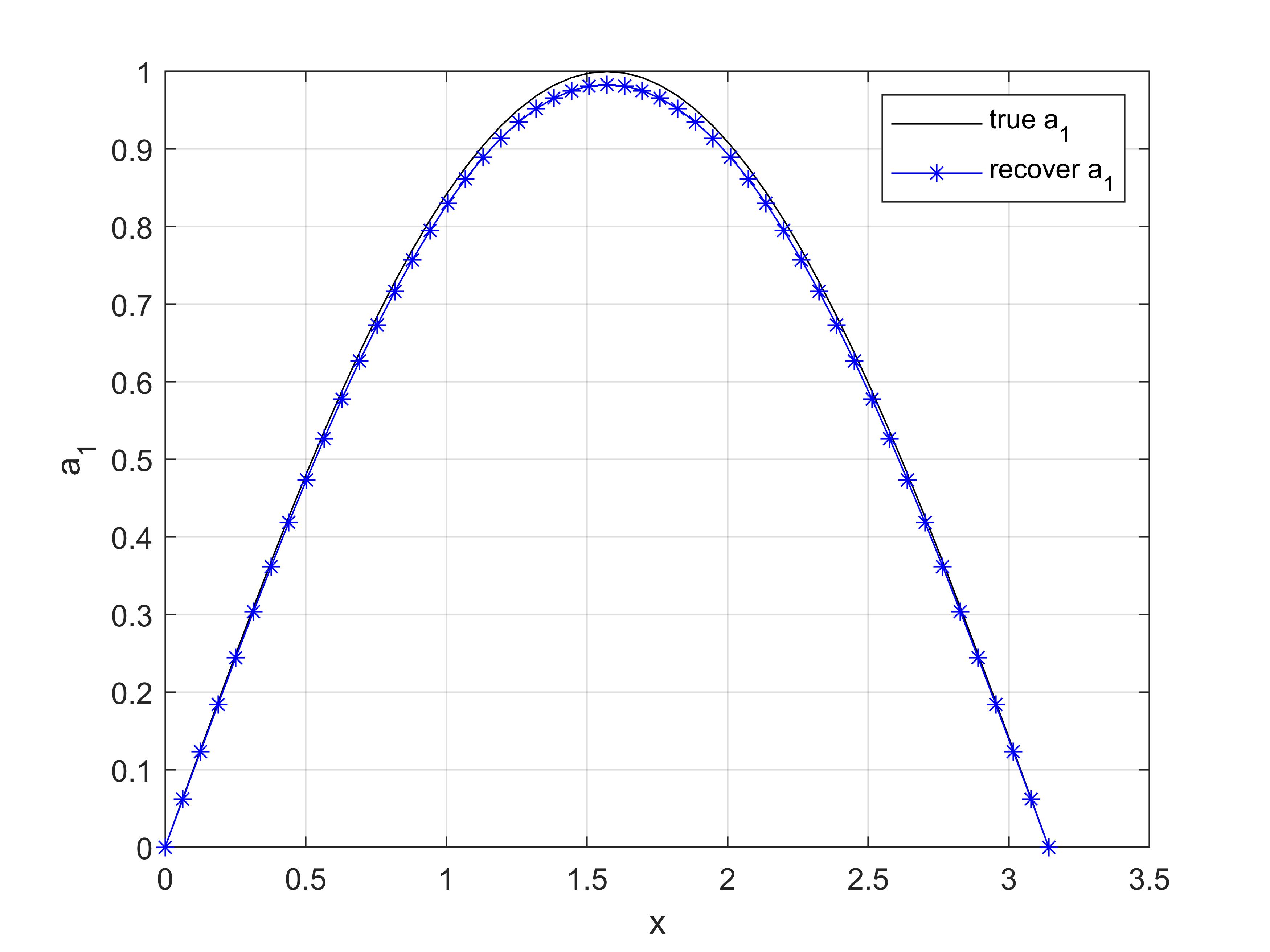}
		\caption{Recover $a_1(x)$ (POD).}
	  \end{minipage}    
     \caption{The case of noise data (Ex. \ref{ex2}).}
\end{figure}

\begin{example}\label{ex3}
We consider recovering more general initial functions. Substituting $a_1(x)$ in Examples \ref{ex1} and \ref{ex2} with the following case:
$$a_1=\begin{cases}1, &0< x\leq \frac{\pi}{2}\\
0, &\frac{\pi}{2} < x\leq \pi\end{cases}.$$
\end{example}

For case Example \ref{ex3}, the time costs are $491.48$ seconds for FEM \cite{LiZhang2025} and $46.96$ seconds for POD.
The noise-free observation data are obtained by numerical methods that contain the calculation error. The reconstruction process is very sensitive to errors, especially for discontinuous $a_1$. Hence, it is reasonable that the numerical results are not that good.

Then, we consider the case of noise data for Example \ref{ex3}. The noise level $\epsilon=\frac{\sigma}{\|u(T)\|_\infty}\approx \frac{0.015}{0.1}=15\%$. The regularization parameter $\lambda=1.26\times10^{-7}$. The time costs are $90.51$ seconds for FEM \cite{LiZhang2025} and $2.00$ seconds for POD. The POD basis functions of Ex. \ref{ex3} are presented. It illustrates how the solution can be represented in the basis function space.

\begin{figure}[!htb]
        \begin{minipage}{0.48\linewidth}% 表示图片的占用那一列的宽度
		\centering
		%\vspace{-0.6cm}%表示图片与最上方的文字的距离
		\setlength{\abovecaptionskip}{0.28cm}% 表示caption 与图片之间的距离
		\includegraphics[width=1\linewidth]{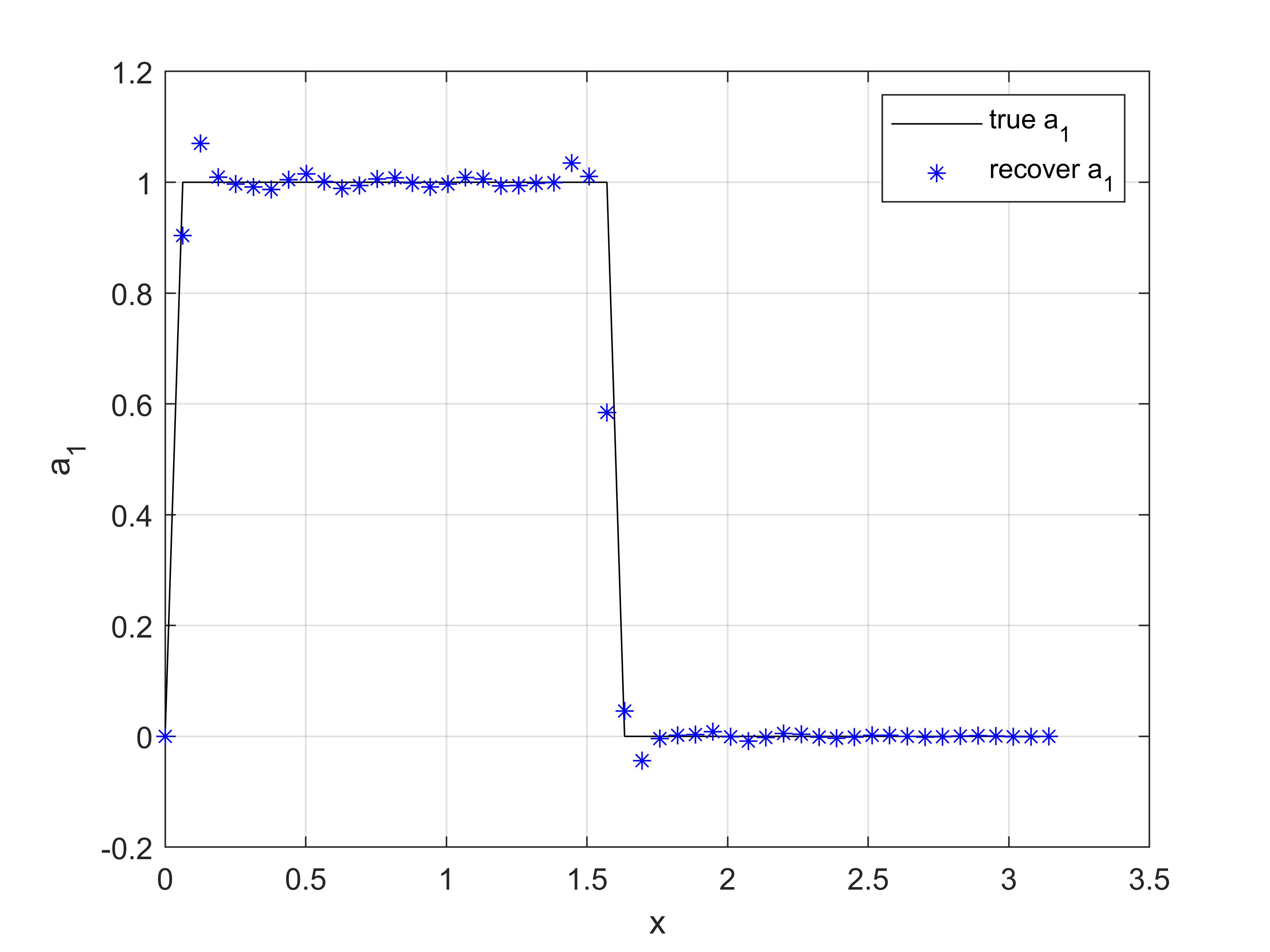}
		\caption{Recover $a_1(x)$ \cite{LiZhang2025}.}
	\end{minipage}
        \begin{minipage}{0.48\linewidth}% 表示图片的占用那一列的宽度
		\centering
		%\vspace{-0.6cm}%表示图片与最上方的文字的距离
		\setlength{\abovecaptionskip}{0.28cm}% 表示caption 与图片之间的距离
		\includegraphics[width=1\linewidth]{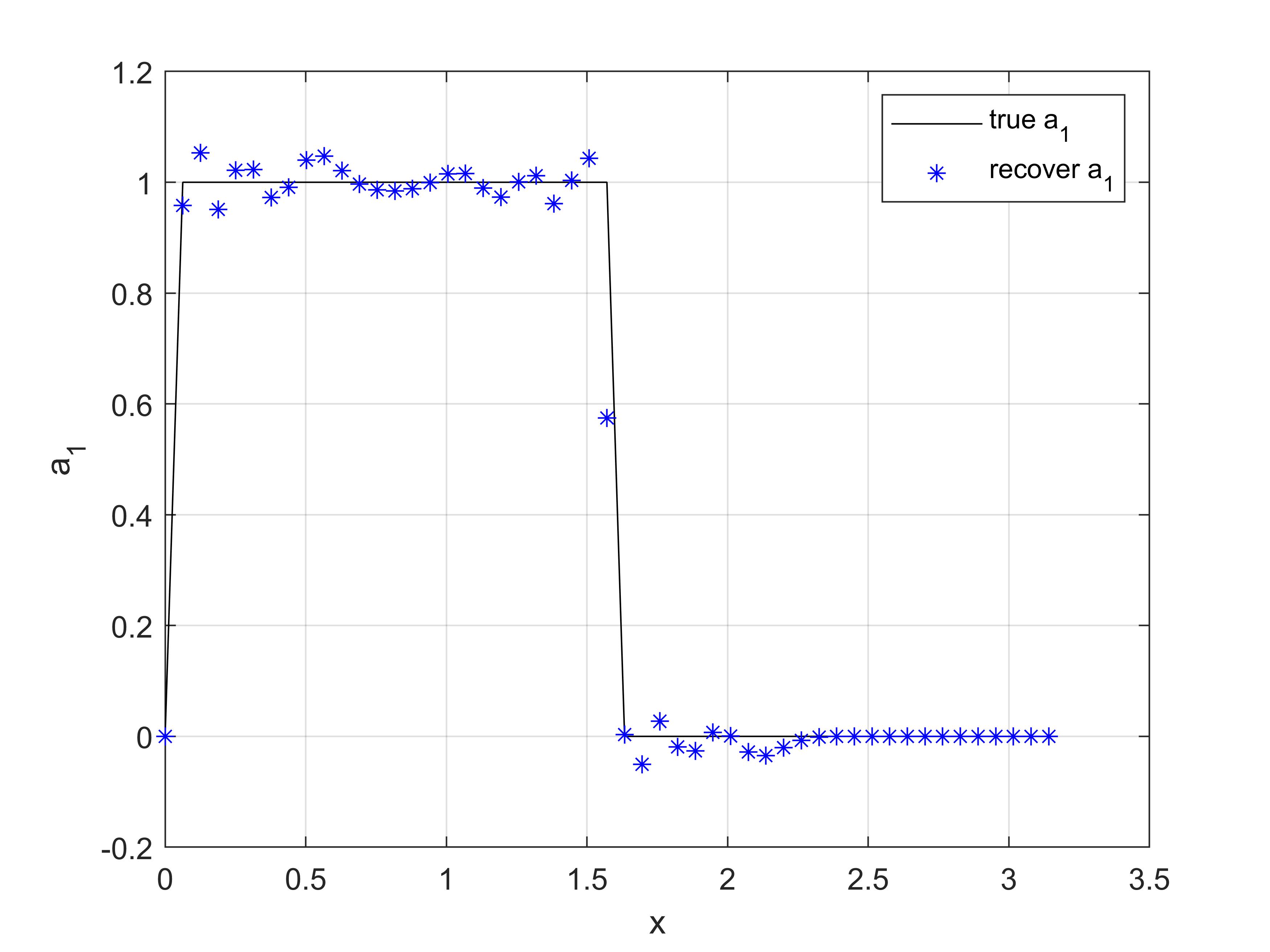}
		\caption{Recover $a_1(x)$ (POD).}
	  \end{minipage}    
      \caption{The case of noise-free data (Ex. \ref{ex3}).}
\end{figure}

\begin{figure}[!htb]
        \begin{minipage}{0.48\linewidth}% 表示图片的占用那一列的宽度
		\centering
		%\vspace{-0.6cm}%表示图片与最上方的文字的距离
		\setlength{\abovecaptionskip}{0.28cm}% 表示caption 与图片之间的距离
		\includegraphics[width=1\linewidth]{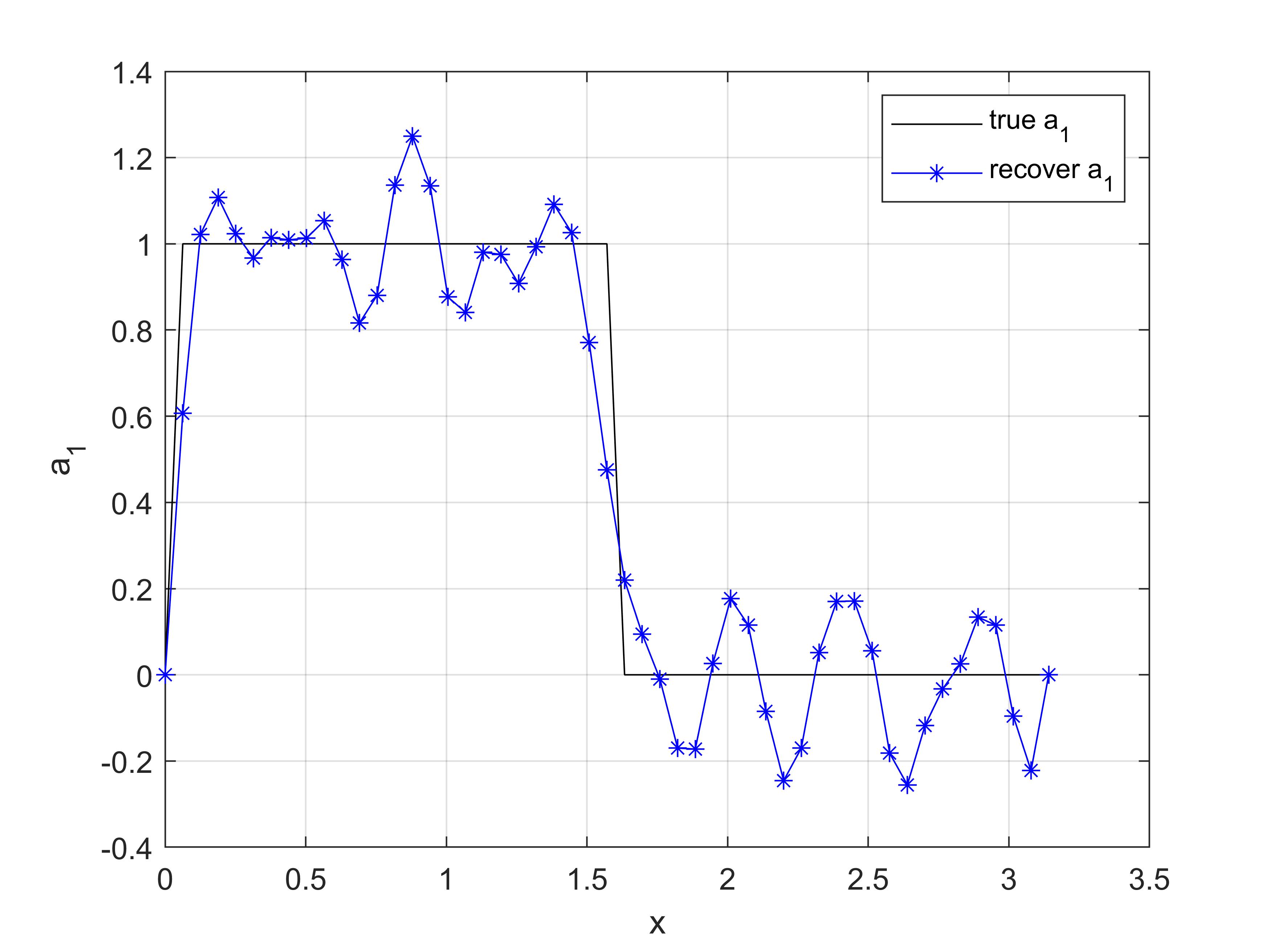}
		\caption{Recover $a_1(x)$ \cite{LiZhang2025}.}
	\end{minipage}
        \begin{minipage}{0.48\linewidth}% 表示图片的占用那一列的宽度
		\centering
		%\vspace{-0.6cm}%表示图片与最上方的文字的距离
		\setlength{\abovecaptionskip}{0.28cm}% 表示caption 与图片之间的距离
		\includegraphics[width=1\linewidth]{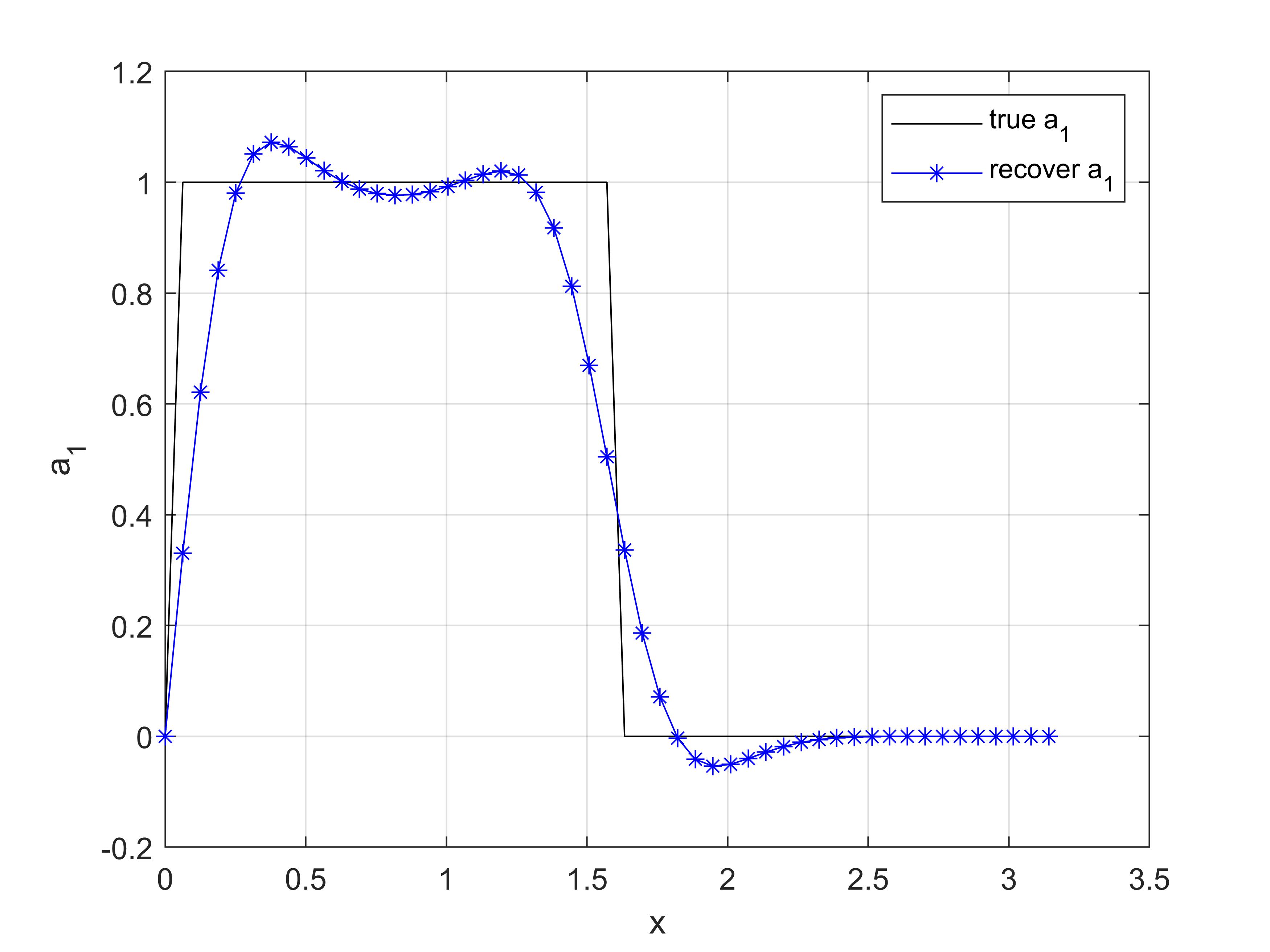}
		\caption{Recover $a_1(x)$ (POD).}
	  \end{minipage}    
    \caption{The case of noise data (Ex. \ref{ex3}).}
\end{figure}

\begin{figure}[!htb]
        \begin{minipage}{0.33\linewidth}% 表示图片的占用那一列的宽度
		\centering
		%\vspace{-0.6cm}%表示图片与最上方的文字的距离
		\setlength{\abovecaptionskip}{0.28cm}% 表示caption 与图片之间的距离
		\includegraphics[width=1\linewidth]{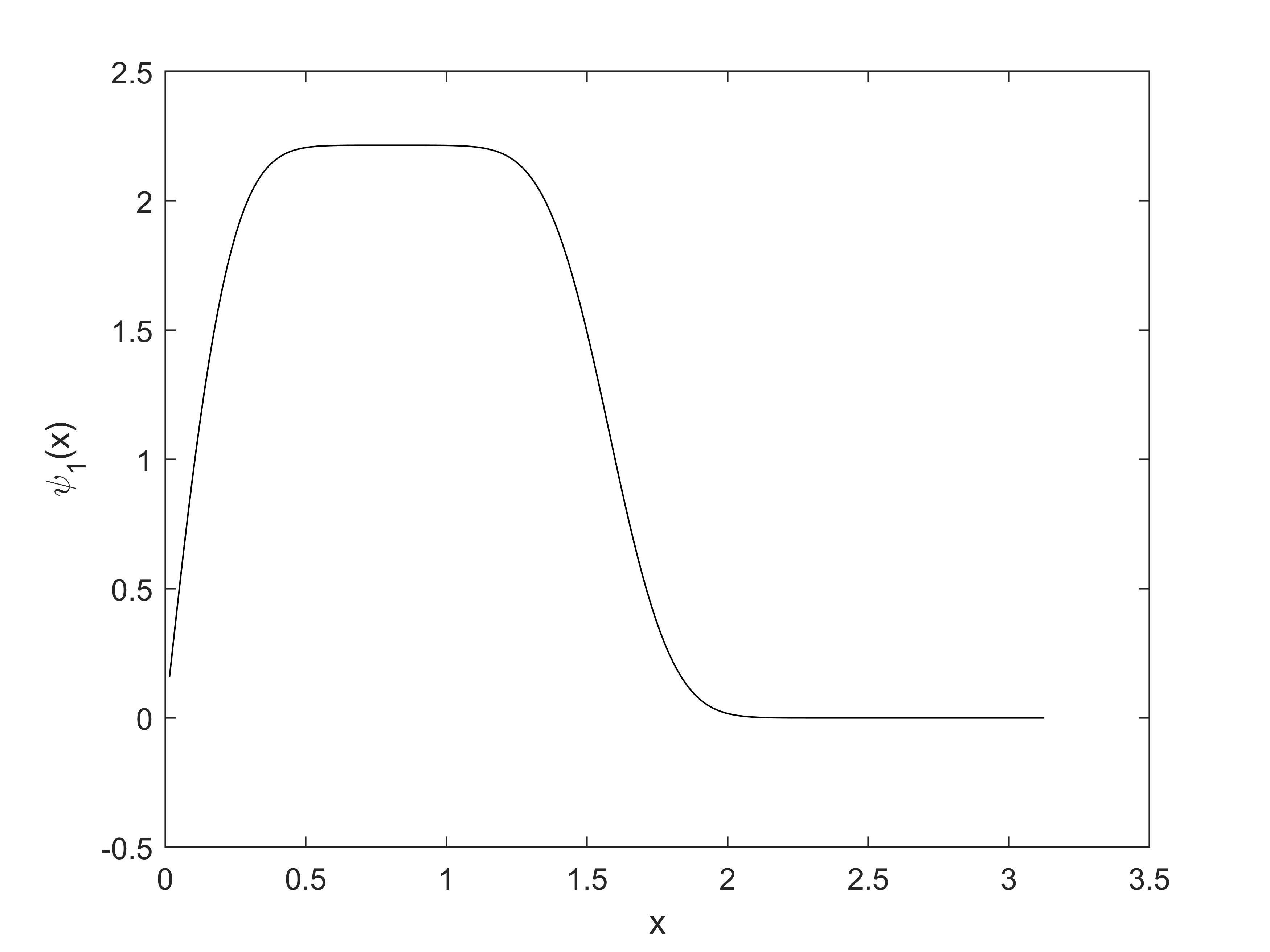}
		\caption{$\psi_1(x)$.}
	\end{minipage}
        \begin{minipage}{0.33\linewidth}% 表示图片的占用那一列的宽度
		\centering
		%\vspace{-0.6cm}%表示图片与最上方的文字的距离
		\setlength{\abovecaptionskip}{0.28cm}% 表示caption 与图片之间的距离
		\includegraphics[width=1\linewidth]{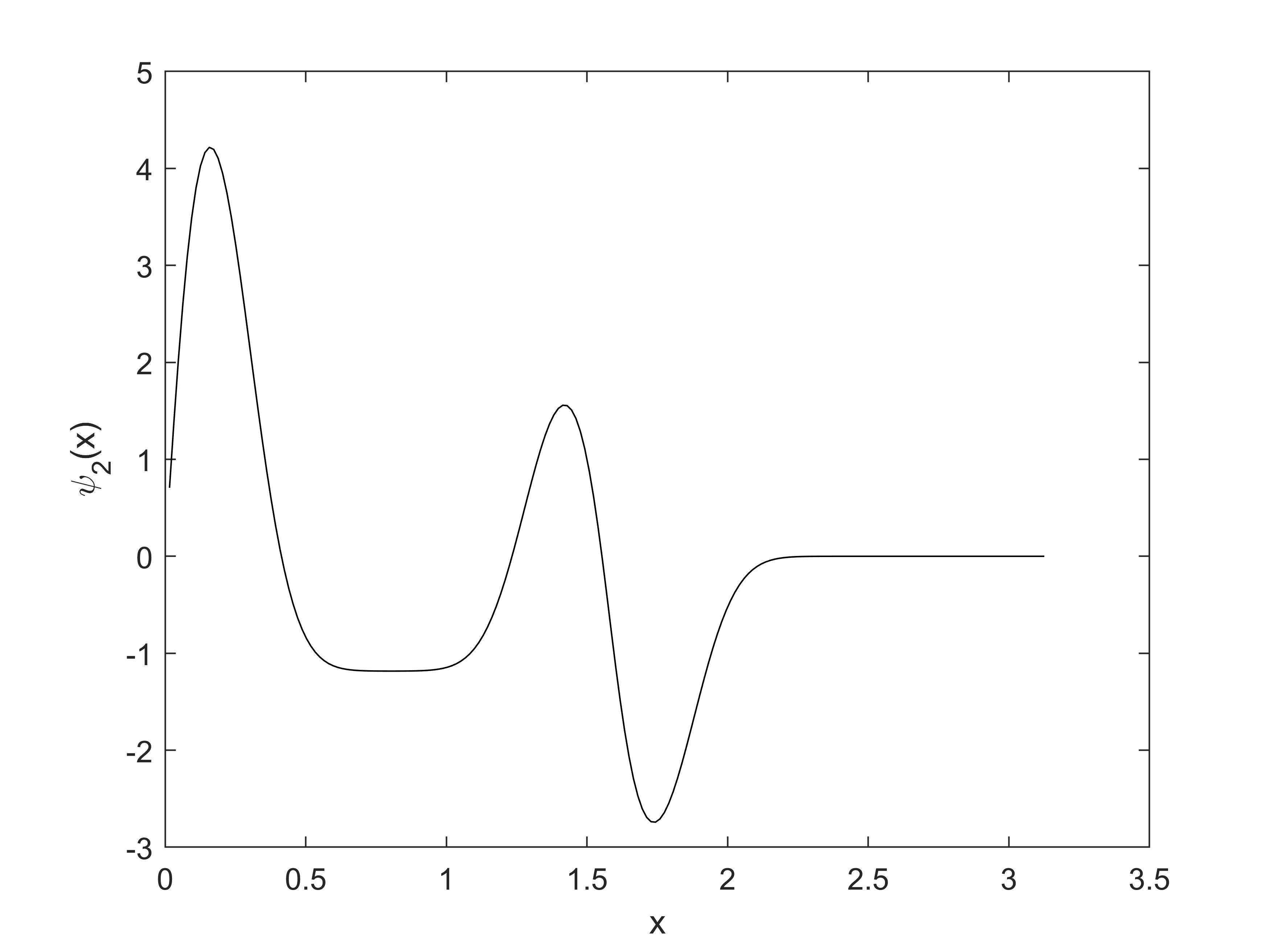}
		\caption{$\psi_2(x)$.}
	  \end{minipage} 
      \begin{minipage}{0.33\linewidth}% 表示图片的占用那一列的宽度
		\centering
		%\vspace{-0.6cm}%表示图片与最上方的文字的距离
		\setlength{\abovecaptionskip}{0.28cm}% 表示caption 与图片之间的距离
		\includegraphics[width=1\linewidth]{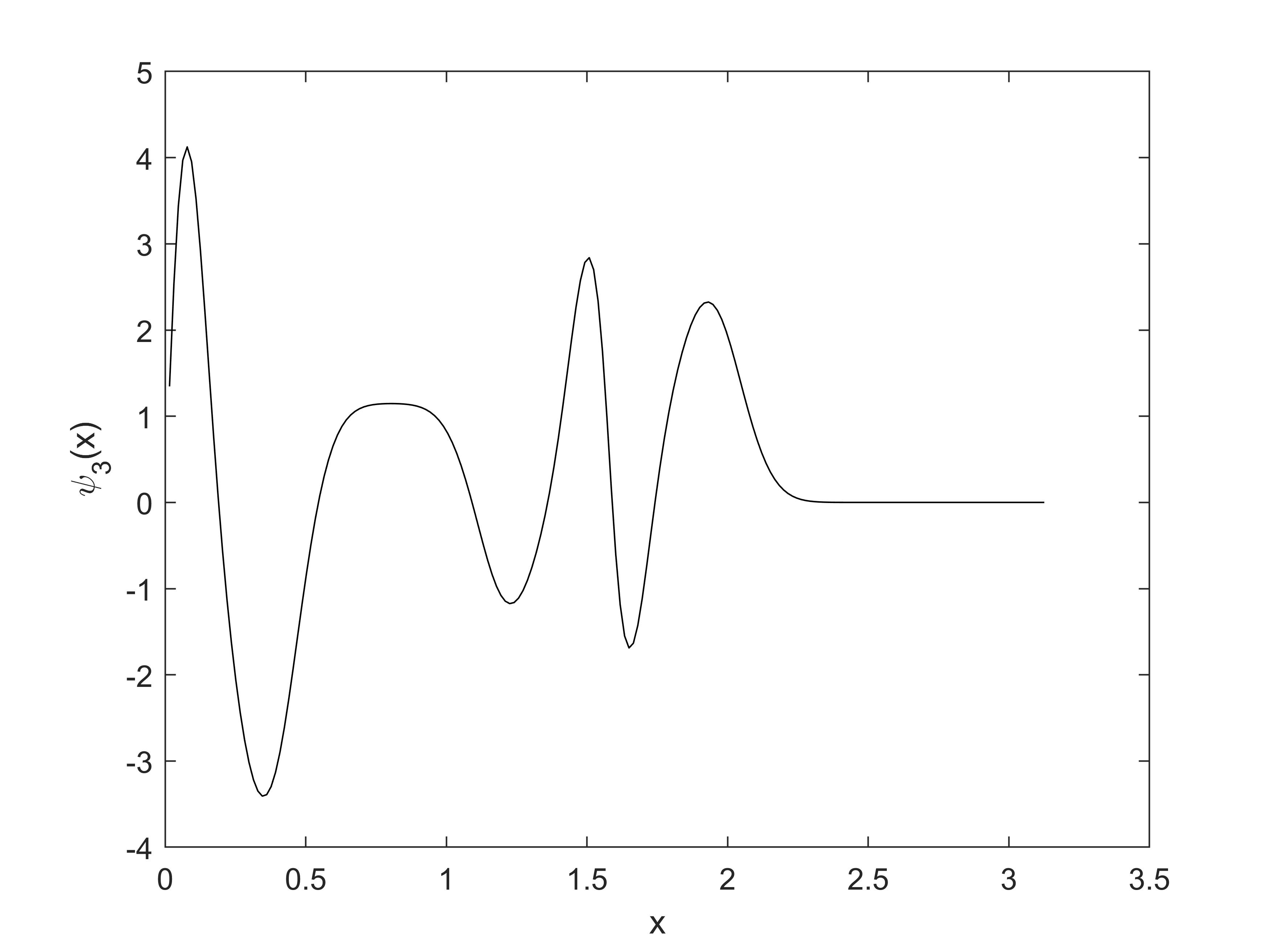}
		\caption{$\psi_3(x)$.}
	\end{minipage}\\
        \begin{minipage}{0.33\linewidth}% 表示图片的占用那一列的宽度
		\centering
		%\vspace{-0.6cm}%表示图片与最上方的文字的距离
		\setlength{\abovecaptionskip}{0.28cm}% 表示caption 与图片之间的距离
		\includegraphics[width=1\linewidth]{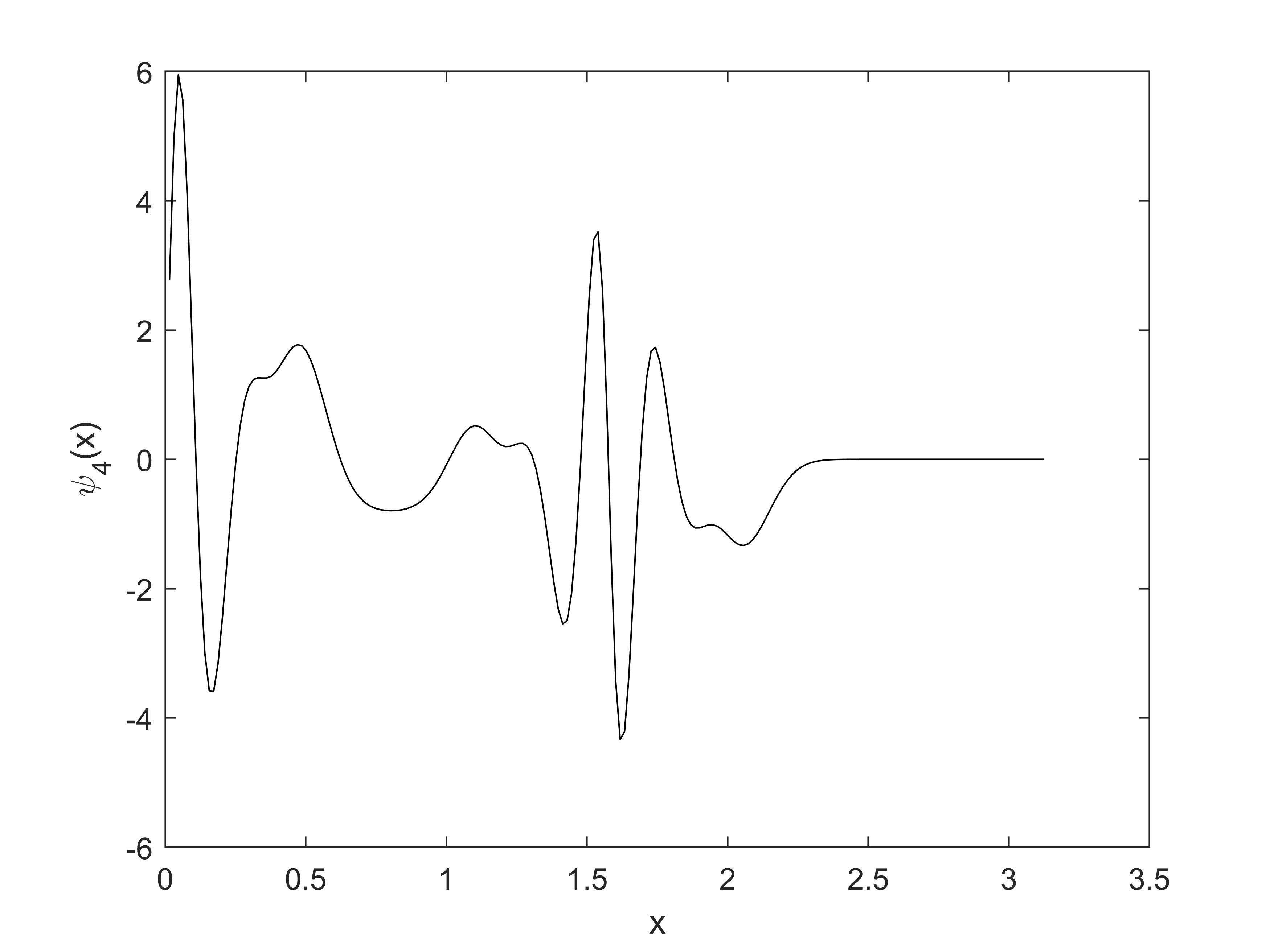}
		\caption{$\psi_4(x)$.}
	  \end{minipage} 
      \begin{minipage}{0.33\linewidth}% 表示图片的占用那一列的宽度
		\centering
		%\vspace{-0.6cm}%表示图片与最上方的文字的距离
		\setlength{\abovecaptionskip}{0.28cm}% 表示caption 与图片之间的距离
		\includegraphics[width=1\linewidth]{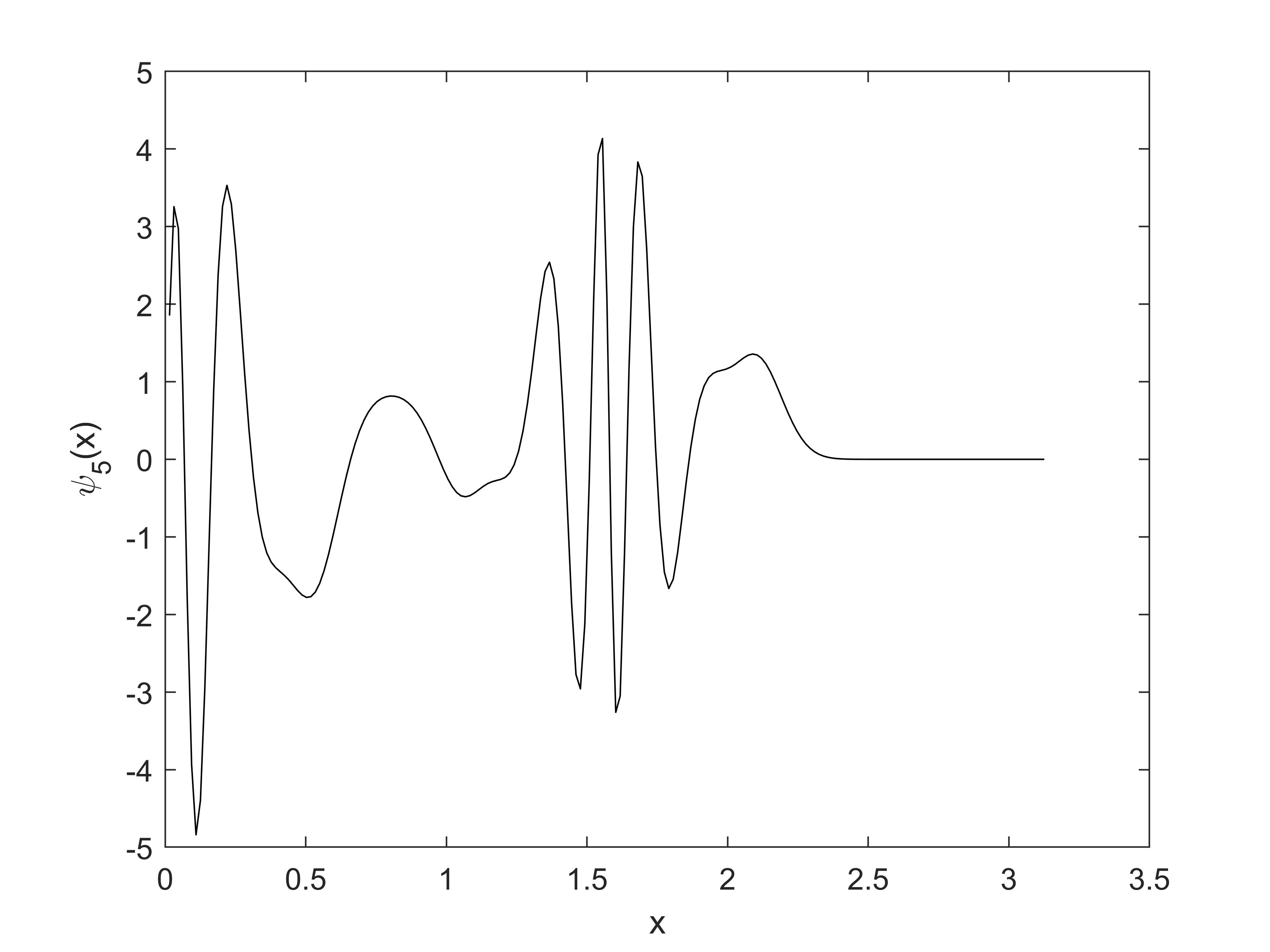}
		\caption{$\psi_5(x)$.}
	  \end{minipage} 
    \caption{The POD basis functions (Ex. \ref{ex3}).}
\end{figure}

\begin{example}\label{ex4}
We consider the following 2D cases in $\Omega:=[0,1]\times[0,1]$:
$$(a)~a_1=\sin(2\pi x)\sin(2\pi y),~(b)~a_1=x(1-x)\sin(2\pi y).$$
\end{example}
We take five POD basis functions in the calculation. The time grid $t_n=0.1(\frac{n}{N})^r$, $N=160$, $r=\frac{2-\nu}{1-\nu}$,  $\nu=\frac{5}{8}$, the space step $h=\frac{1}{30}$. For Example \ref{ex4} (a), the noise level $\epsilon=\frac{\sigma}{\|u(T)\|_\infty}\approx \frac{0.005}{0.023}=22\%$. The regularization parameter $\lambda=2.44\times10^{-7}$. The time costs are $258.23$ seconds for FEM \cite{LiZhang2025} and $1.06$ seconds for POD.

\begin{figure}[!htb]
        \begin{minipage}{0.48\linewidth}% 表示图片的占用那一列的宽度
		\centering
		%\vspace{-0.6cm}%表示图片与最上方的文字的距离
		\setlength{\abovecaptionskip}{0.28cm}% 表示caption 与图片之间的距离
		\includegraphics[width=1\linewidth]{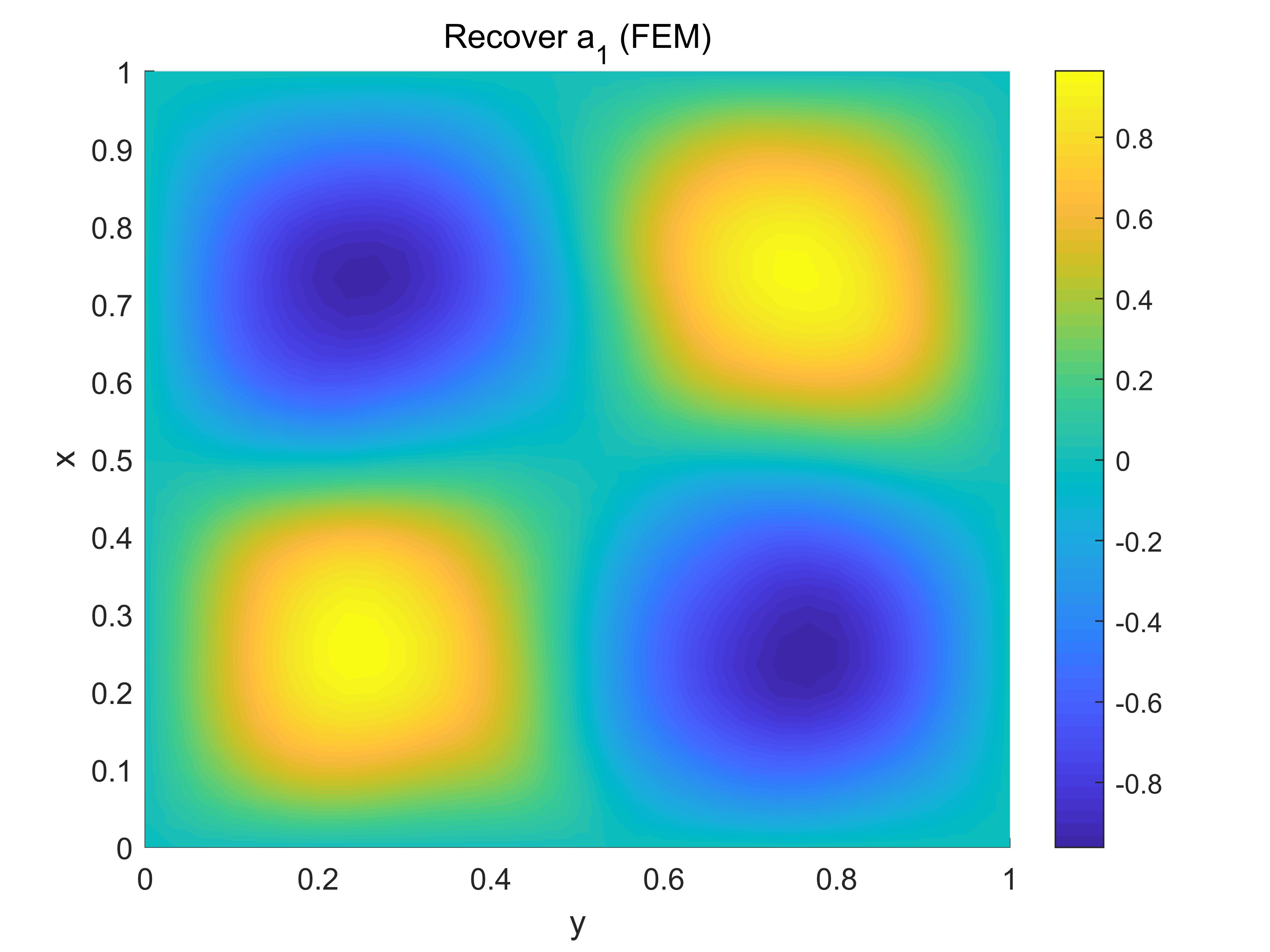}
		\caption{Recover $a_1(x)$ \cite{LiZhang2025}.}
	\end{minipage}
        \begin{minipage}{0.48\linewidth}% 表示图片的占用那一列的宽度
		\centering
		%\vspace{-0.6cm}%表示图片与最上方的文字的距离
		\setlength{\abovecaptionskip}{0.28cm}% 表示caption 与图片之间的距离
		\includegraphics[width=1\linewidth]{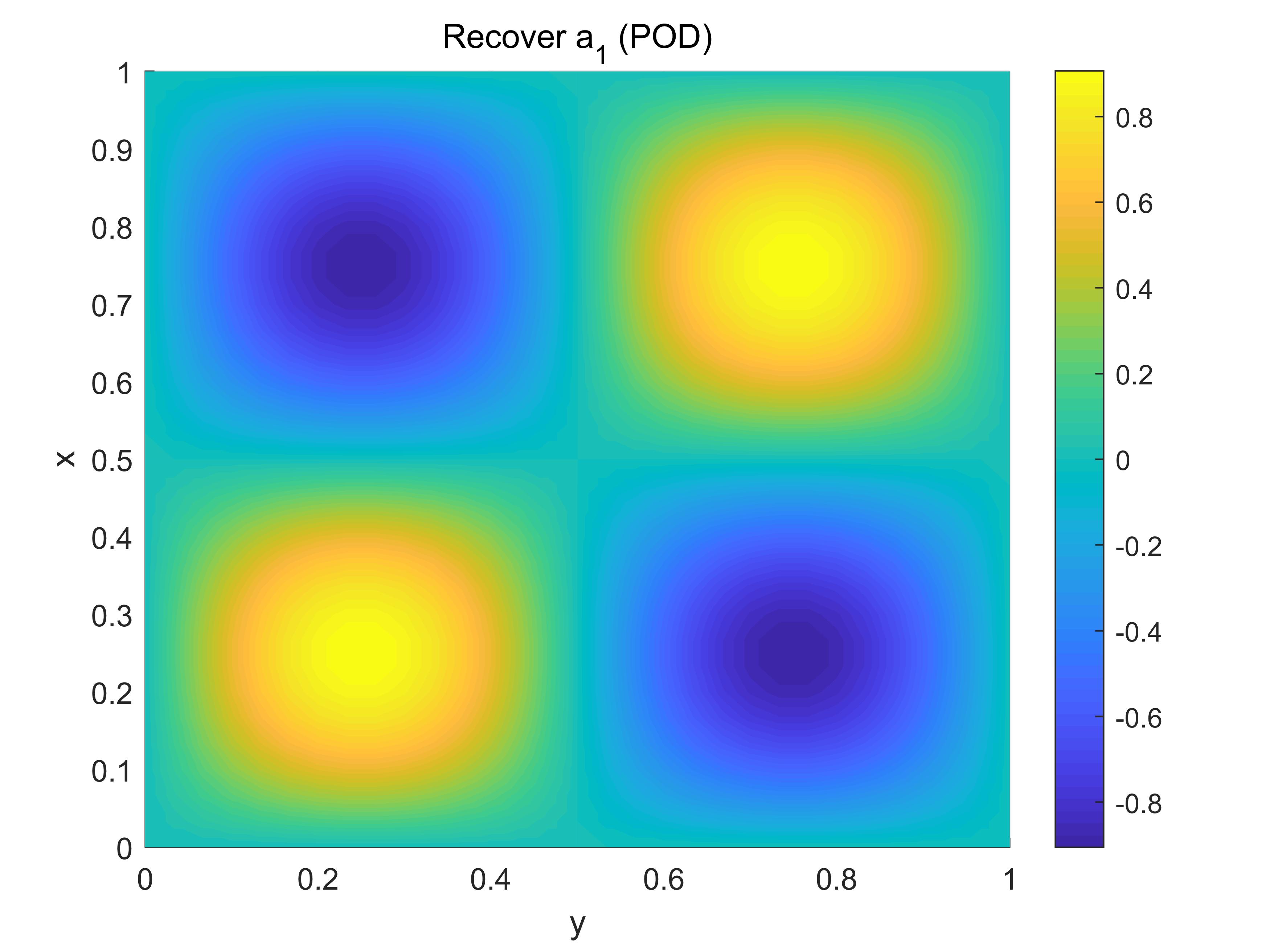}
		\caption{Recover $a_1(x)$ (POD).}
	  \end{minipage}    
     \caption{Numerical reconstruction Ex. \ref{ex4} (a).}
\end{figure}

For Example \ref{ex4} (b), the noise level $\epsilon=\frac{\sigma}{\|u(T)\|_\infty}\approx \frac{0.005}{0.0093}=54\%$. The regularization parameter $\lambda=2.6\times10^{-6}$. The time costs are $675.04$ seconds for FEM \cite{LiZhang2025} and $0.97$ seconds for POD. It shows that POD method also works well for 2D cases.

\begin{figure}[!htb]
        \begin{minipage}{0.48\linewidth}% 表示图片的占用那一列的宽度
		\centering
		%\vspace{-0.6cm}%表示图片与最上方的文字的距离
		\setlength{\abovecaptionskip}{0.28cm}% 表示caption 与图片之间的距离
		\includegraphics[width=1\linewidth]{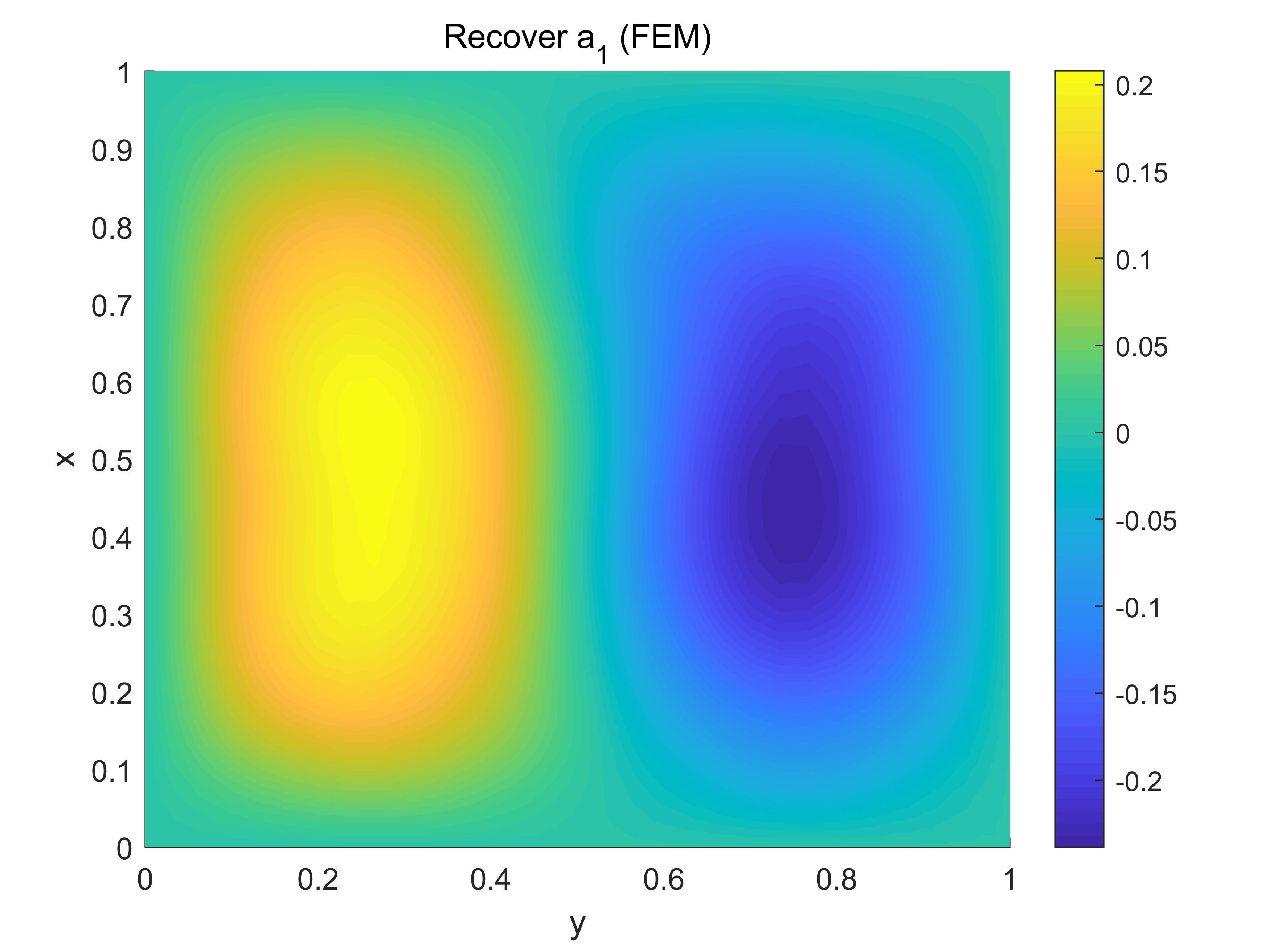}
		\caption{Recover $a_1(x)$ \cite{LiZhang2025}.}
	\end{minipage}
        \begin{minipage}{0.48\linewidth}% 表示图片的占用那一列的宽度
		\centering
		%\vspace{-0.6cm}%表示图片与最上方的文字的距离
		\setlength{\abovecaptionskip}{0.28cm}% 表示caption 与图片之间的距离
		\includegraphics[width=1\linewidth]{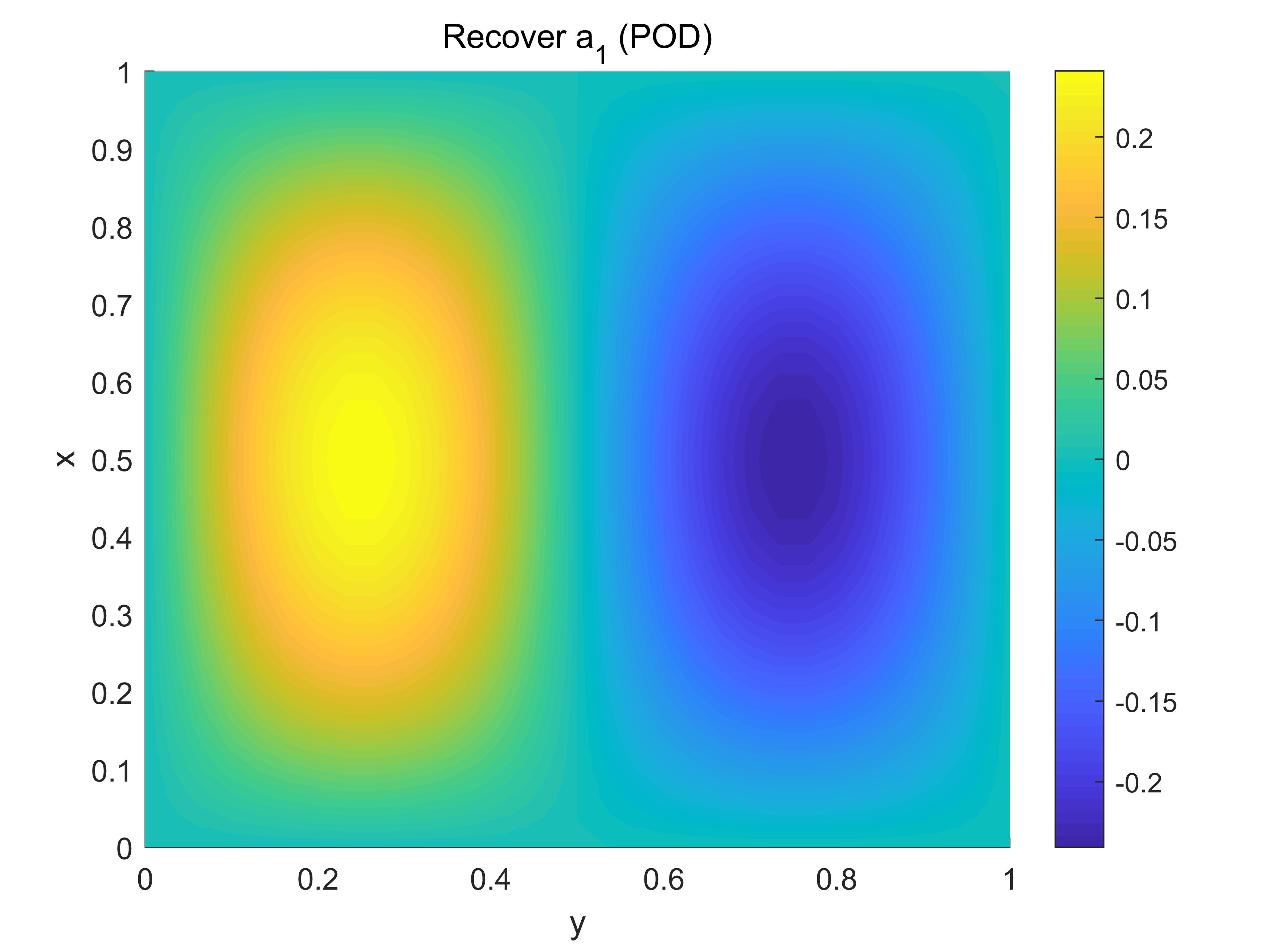}
		\caption{Recover $a_1(x)$ (POD).}
	  \end{minipage}    
     \caption{Numerical reconstruction Ex. \ref{ex4} (b).}
\end{figure}

\section{Concluding remarks}
A model reduction approach is proposed based on measurement data. It recovers the initial value in much less time and avoids the inverse crime. It provides a fast algorithm idea to improve the efficiency of solving the linear inverse problems based on the iteration optimization method.

\section*{Declarations}
On behalf of all authors, the corresponding author states that there is no conflict of interest. No datasets were generated or analyzed during the current study.

%\section*{Acknowledgments}
%Zhiyuan Li thanks the National Natural Science Foundation of China (no. 12271277) and Ningbo Youth Leading Talent Project (no. 2024QL045). The research of Wenlong Zhang is supported by the National Natural Science Foundation of China under grant numbers No.12371423 and No.12241104. This work is partially supported by the Open Research Fund of the Key Laboratory of Nonlinear Analysis \& Applications (Central China Normal University), Ministry of Education, China (no. NAA20230RG002).


\begin{thebibliography}{10}

\bibitem{LiZhang2025}
D.~Cen, Z.~Li, and W.~Zhang.
\newblock Numerical analysis of scattered point measurement-based
  regularization for backward problems for fractional wave equations.
\newblock {\em CSIAM Transactions on Applied Mathematics}, Accepted.

\bibitem{LiZhang2024}
D.~Cen, Z.~Li, and W.~Zhang.
\newblock Scattered point measurement-based regularization for backward
  problems for fractional wave equations.
\newblock {\em Journal of Scientific Computing}, 104:58, 2025.

\bibitem{CenZhang:2025}
Dakang Cen, Wenlong Zhang, and Junbin Zhong.
\newblock A randomized progressive iterative regularization method for data
  fitting problems.
\newblock {\em arXiv preprint arXiv:2506.03526}, 2025.

\bibitem{JinB2017sub_pod}
B.~Jin and Z.~Zhou.
\newblock An analysis of galerkin proper orthogonal decomposition for
  subdiffusion.
\newblock {\em ESAIM: Mathematical Modelling and Numerical Analysis},
  51:89--113, 2017.

\bibitem{LiaoH2018L1}
H.~Liao, D.~Li, and J.~Zhang.
\newblock Sharp error estimate of a nonuniform l1 formula for time-fractional
  reaction subdiffusion equations.
\newblock {\em SIAM Journal on Numerical Analysis}, 56:1112--1133, 2018.

\bibitem{LyuP2022SFOR}
P.~Lyu and S.~Vong.
\newblock A symmetric fractional-order reduction method for direct nonuniform
  approximations of semilinear diffusion-wave equations.
\newblock {\em Journal of Scientific Computing}, 93:34, 2022.

\bibitem{podlubny1999fractional}
I.~Podlubny.
\newblock {\em Fractional differential equations}, volume 198 of {\em
  Mathematics in Science and Engineering}.
\newblock Academic Press, Inc., San Diego, CA, 1999.
\newblock An introduction to fractional derivatives, fractional differential
  equations, to methods of their solution and some of their applications.

\bibitem{Sirovich1987}
L.~Sirovich.
\newblock Turbulence and the dynamics of coherent structures. i. coherent
  structures.
\newblock {\em Quarterly of applied mathematics}, 45(3):561--571, 1987.

\bibitem{utreras1988convergence}
Florencio~I. Utreras.
\newblock Convergence rates for multivariate smoothing spline functions.
\newblock {\em Journal of Approximation Theory}, 52(1):1--27, 1988.

\bibitem{Zhang2024pod}
Z.~Wang, W.~Zhang, and Z.~Zhang.
\newblock A data-driven model reduction method for parabolic inverse source
  problems and its convergence analysis.
\newblock {\em Journal of Computational Physics}, 487:112156, 2023.

\bibitem{ZhangZhang2024pod}
Z.~Zhang and Z.~Zhang.
\newblock A novel model reduction method for parabolic inverse problems without
  inverse crime.
\newblock {\em Journal of Scientific Computing}, 105:80, 2025.

\end{thebibliography}
\end{document}